\newtheorem{theorem}{Theorem}[section]
\newtheorem{lemma}[theorem]{Lemma}
\newtheorem{proposition}[theorem]{Proposition}
\newtheorem{corollary}[theorem]{Corollary}
\theoremstyle{definition}
\theoremstyle{remark}
\newtheorem{remark}[theorem]{Remark}
\newtheorem{definition}[theorem]{Definition}
\newcommand{\R}{\mathbb{R}}
\newcommand{\Z}{\mathbb{Z}}
\newcommand{\N}{\mathbb{N}}
\newcommand{\Hy}{\mathbb{H}}
\newcommand{\cA}{\mathcal{A}}
\newcommand{\cE}{\mathcal{E}}
\newcommand{\al}{\alpha}
\newcommand{\ga}{\gamma}
\newcommand{\Ga}{\Gamma}
\newcommand{\De}{\Delta}
\newcommand{\ep}{\varepsilon}
\newcommand{\om}{\omega}
\newcommand{\la}{\lambda}
\renewcommand{\phi}{\varphi}
\newcommand{\rank}{\operatorname{rank}}
\newcommand{\CAT}{\operatorname{CAT}}
\newcommand{\id}{\operatorname{id}}
\newcommand{\ISO}{\operatorname{ISO}}
\newcommand{\Ax}{\operatorname{Ax}}
\newcommand{\crt}{\operatorname{crt}}
\renewcommand{\d}{\partial}
\newcommand{\sub}{\subset}
\begin{document}

\title{Group actions on geodesic Ptolemy spaces}
\author{Thomas Foertsch,  
 \  Viktor Schroeder\footnote{Both authors are supported by the Swiss National
Science Foundation, grant 200021-115919}}

\maketitle

\begin{abstract}
In this paper we study geodesic Ptolemy metric spaces $X$ which allow
proper and cocompact isometric actions of crystallographic or, 
more generally, virtual polycyclic groups.
We show that $X$ is equivariantly rough isometric to a Euclidean space.
\end{abstract}


\section{Introduction}

\subsection{Statement of Results}

A metric space $X$ is called a Ptolemy metric space,
if the inequality
\begin{equation} \label{eqn:ptolemy}
|xy||uv| \; \le \; |xu| \, |yv| \; + \; |xv| \, |yu|
\end{equation}
is satisfied for all $x,y,u,v\in X$. \\

In this paper we study 
subgroups $\Ga \subset \ISO(X)$ of the isometry group
of geodesic Ptolemy spaces
$X$, which act properly and cocompactly on $X$. 
Our motivation to study these spaces is described in
Subsection \ref{subsec:motivation}. \\

We adopt the notation of \cite{BH}(p.131) and  call an action
{\em proper}, if for every $x\in X$ there is a number $r>0$ such that
the set $\{\ga\in \Ga \mid |x\,\ga(x)| < r\}$ is finite. An action
is {\em cocompact}, if there is a compact subset $K \subset X$, such that
$X = \Ga K$.
Since $X$ allows a proper and cocompact action by isometries,
$X$ is complete and, since $X$ is geodesic, it is indeed a proper 
metric space, i.e. closed distance balls are compact
(see \cite{BH} p. 132, 8.4(1)).\\
The fact that $X$ is a proper metric space is used
essentially in our paper.

We recall that a group $\Ga$ is called {\em crystallographic} if, for some
$n\in \N$, $\Ga$ is isomorphic
(as an abstract group) to
a discrete, cocompact subgroup of the isometry group
$\ISO(\R^n)$. The number $n$ is then called the {\em rank} of $\Ga$.
A map $\phi:X\to Y$ between metric spaces is called  
{\em roughly isometric}, if there exists a constant $A \ge 0$ such that for all $x,x'\in X$ 
we have
\begin{equation} \label{eqn:roughisometric}
|xx'| -A \le |\phi(x)\phi(x')|\le |xx'|+A.
\end{equation}
If, in addition, for all $y \in Y$ there exits $x\in X$ with $|y\,\phi(x)|\le A$, we call
$\phi$ a {\em rough isometry}.
 
\begin{theorem} \label{thm:cryst}
Let $X$ be a geodesic Ptolemy metric space and
let $\Ga \subset \ISO(X)$ be a
crystallographic group of rank $n$ acting properly and cocompactly on $X$.
Then there exists a continuous rough isometry
$\phi:X \to \R^n$, which is equivariant with respect to
the group operation, i.e.
$\phi(\ga x)=\ga \phi(x)$, where $\Ga$ acts on $\R^n$ by isometries with compact quotient.
\end{theorem}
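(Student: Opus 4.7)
The plan is: use Bieberbach's first theorem to reduce to the maximal translation subgroup, build from it a $\Ga$-invariant isometrically embedded Euclidean flat $F\sub X$, and take $\phi$ to be the nearest-point projection onto $F$.

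By Bieberbach's first theorem, $\Ga$ contains a normal subgroup $\La\sub\Ga$ of finite index with $\La\cong\Z^n$, corresponding under the given abstract identification to the pure-translation subgroup of the relevant discrete cocompact subgroup of $\ISO(\R^n)$. I would then analyze each nontrivial $\la\in\La$ as a hyperbolic isometry of the proper space $X$: properness and cocompactness of the $\Ga$-action force the translation length $\tau(\la):=\inf_{x\in X}|x\,\la(x)|$ to be positive, and since $X$ is proper the infimum is attained on a closed nonempty set. Using the authors' earlier results on isometries of geodesic Ptolemy spaces — unique geodesics together with the $4$-point inequality rule out distinct parallel minimal geodesics — each such $\la$ then has a unique axis $\Ax(\la)\sub X$ on which it acts by translation by $\tau(\la)$.

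Next I would build a $\Ga$-invariant flat. For commuting $\la,\mu\in\La$, a Ptolemy flat-strip lemma (two parallel axes of commuting Ptolemy-hyperbolic isometries bound a flat Euclidean strip) pairs their axes into a $2$-dimensional flat; iterating on the rank of $\La$ produces a $\La$-invariant, isometrically embedded Euclidean flat $F\cong\R^n$ inside $X$, on which $\La$ acts by translations through the original lattice. Canonicity of $F$ — as the common minimum set of $\La$ — combined with normality of $\La$ in $\Ga$ forces $\Ga$ to preserve $F$; the resulting isometric $\Ga$-action on $F$ is conjugate to the given crystallographic one, so one can pick a $\Ga$-equivariant isometry $F\cong\R^n$.

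Finally, let $\phi:X\to F\cong\R^n$ be the nearest-point projection, which is well defined and continuous in a geodesic Ptolemy space and is $\Ga$-equivariant by invariance of $F$. This projection is $1$-Lipschitz, so $|\phi(x)\phi(y)|\le|xy|$; the cocompactness constant $D:=\sup_{x\in X}\dist(x,F)$ is finite because $F$ already contains a full $\La$-orbit and $\Ga/\La$ is finite, and then the triangle inequality gives $|xy|\le|\phi(x)\phi(y)|+2D$, which is \eqref{eqn:roughisometric} with $A=2D$; surjectivity up to $D$ follows from $F\sub X$. The principal obstacle is the third step: producing $F\cong\R^n$ is the Ptolemy analogue of the $\CAT(0)$ Flat Torus Theorem and rests on a flat-strip lemma for pairs of parallel Ptolemy axes of commuting isometries, which is where the $4$-point Ptolemy inequality enters most essentially.
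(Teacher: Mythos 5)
Your plan founders at its central step, and in a way the authors themselves flag. The construction of a $\Ga$-invariant, isometrically embedded flat $F\cong\R^n$ rests on a ``Ptolemy flat-strip lemma'' for parallel axes of commuting isometries. No such lemma is available: in Subsection \ref{subsec:outline} the authors state explicitly that they do not know whether the flat strip theorem holds for geodesic Ptolemy spaces, even in the presence of an isometric translation along the strip, and the paper closes with the \emph{open question} of whether a $\Ga$-invariant totally geodesic Euclidean flat $F\sub X$ exists under exactly your hypotheses. So the object your proof pivots on is precisely what is not known to exist; the $\CAT(0)$ Flat Torus Theorem argument does not transfer because its engine --- convexity of $t\mapsto d(c(t),c'(t))$ for two geodesics --- fails to be available here (only convexity of $d(p,\cdot)$ for a fixed point $p$ is known, via Lemma \ref{lem-distance-convex}). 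Your preliminary claim that each nontrivial $\la\in\La$ has a \emph{unique} axis is also unjustified and cannot be right in the intended generality: uniqueness of geodesics between points does not exclude parallel lines (Euclidean space itself has both), and the paper's lemma on axes only shows that any two axes of $\la$ stay at bounded distance. A smaller but real further gap: the paper explicitly does not know whether the nearest-point projection onto a closed convex set is $1$-Lipschitz in a proper geodesic Ptolemy space (see the remark after Corollary \ref{cor:projection}), so you could not assert that either, though for a rough isometry the crude bound $|\phi(x)\phi(y)|\le|xy|+2D$ from the triangle inequality would suffice.

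For contrast, the paper's actual route avoids constructing a flat inside $X$ altogether. It passes to a minimal closed convex $\Ga$-invariant subset, shows that for each $\al$ in the maximal normal free abelian subgroup $\Ga^*$ the sum of Busemann functions $B_\al=b^+_\al+b^-_\al$ vanishes identically (a bounded convex invariant function on a minimal set), so the Busemann functions $b^\pm_\al$ are \emph{affine}; it then uses these affine functions to show the evaluation map $A_o:X\to\cE_o\sub\cA^*$ into the dual of the space of affine Lipschitz functions is an equivariant rough isometry onto an $n$-dimensional affine subspace, and finally invokes Schoenberg's theorem to identify that normed space with Euclidean $\R^n$. The flat is produced in the \emph{target}, not in $X$.
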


In the case that $X$ is a topological manifold we can say more.

\begin{theorem} \label{thm:manifold}
Let $X$ be a geodesic Ptolemy metric space
which is in addition a topological manifold. 
Assume that $\Ga \subset \ISO(X)$ 
is a crystallographic group of rank $n$
which acts properly and cocompactly on $X$.
Then there actually exists an equivariant isometry
$\phi:X \to \R^n$.
\end{theorem}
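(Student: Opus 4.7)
My plan is to upgrade the equivariant rough isometry furnished by Theorem~\ref{thm:cryst} to a genuine isometry by combining the manifold hypothesis with a flat-torus argument in the Ptolemy setting.

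First I apply Theorem~\ref{thm:cryst} to obtain a continuous $\Ga$-equivariant rough isometry $\phi\colon X\to\R^n$, and pass to the translation subgroup $T\cong\Z^n$ of finite index in $\Ga$. Any $t\in T\sm\{e\}$ corresponds under $\phi$ to a nontrivial Euclidean translation and therefore has unbounded orbits on $X$; by properness it acts freely. Geodesic Ptolemy spaces are uniquely geodesic---this follows from Ptolemy applied to the midpoints of two putatively distinct geodesics with the same endpoints---and hence contractible, so $T\backslash X$ is a compact aspherical topological manifold with fundamental group $\Z^n$. The dimension of such a manifold is forced to equal $n$, so $\dim X=n$.

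Next I build a $T$-invariant Euclidean flat $F\sub X$. Applying the Ptolemy inequality with three of the four points taken on a geodesic yields midpoint convexity, and hence convexity, of every distance function $p\mapsto |px|$ along geodesics. For each $t\in T$ the displacement $d_t(x):=|x\,tx|$ is then convex and $T$-invariant, so it descends to the compact quotient $T\backslash X$ and attains its infimum on a nonempty closed convex set $M_t\sub X$; every geodesic in $M_t$ realizing $d_t$ is a $t$-axis. Commuting elements of $T$ preserve one another's minsets, and the Ptolemy analogue of the Flat Strip Theorem---parallel geodesics in a geodesic Ptolemy space bound flat Euclidean strips---allows me to assemble the axes of a set of commuting generators of $T$ into a closed convex subset $F\sub X$ isometric to Euclidean $\R^n$, on which $T$ acts by the prescribed translations.

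Finally, since $F$ is a topological $n$-manifold embedded in the topological $n$-manifold $X$, invariance of domain makes $F$ open in $X$; as $F$ is also closed and $X$ is connected, we conclude $F=X$. Hence $X$ is itself isometric to $\R^n$, and by the $\Ga$-equivariance of $\phi$ the full crystallographic group acts on $X=\R^n$ through its original Euclidean isometries, yielding the desired equivariant isometry.

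The main obstacle will be the Ptolemy Flat Strip step: verifying that in a geodesic Ptolemy space two parallel geodesics must bound a flat Euclidean strip. The CAT(0) proof does not transfer verbatim; I expect to argue it directly from the Ptolemy inequality, using its equality case on the corners of the putative strip together with the convexity of distance functions established above.
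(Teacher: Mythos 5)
There is a genuine gap, and it sits exactly where you predicted: the ``Ptolemy analogue of the Flat Strip Theorem.'' This is not a technical step you can expect to fill in by massaging the equality case of the Ptolemy inequality --- it is stated explicitly in the paper (Subsection~\ref{subsec:outline}) as an \emph{open question}, even in the most favourable special case where the strip is $\R\times[0,1]$ and the translation $T(t,s)=(t+1,s)$ is assumed to be an isometry. The CAT(0) proof of the flat strip theorem rests on the convexity of $t\mapsto d(c(t),c'(t))$ for two geodesics $c,c'$, and the authors emphasize that precisely this convexity is unavailable in Ptolemy spaces; only the much weaker convexity of $d(p,\cdot)$ for a fixed point $p$ survives. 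Your second stage also implicitly requires a flat \emph{torus} theorem (assembling axes of $n$ commuting translations into an $n$-flat), and the paper closes with the open question of whether a $\Ga$-invariant totally geodesic Euclidean flat $F\subset X$ exists at all --- which is exactly the object your argument needs to construct before invariance of domain can be invoked. So the core of your proof assumes the resolution of two problems the authors state they cannot solve. A smaller but real inaccuracy: unique geodesics do \emph{not} follow from ``Ptolemy applied to the midpoints''; for two midpoints $m_1,m_2$ of $x,y$ the Ptolemy inequality only yields $|m_1m_2|\le\frac12|xy|$. Uniqueness of midpoints requires strict distance convexity, which is Theorem~\ref{thm:sdc}, a nontrivial result whose proof uses properness in an essential way.

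For comparison, the paper's route avoids flat strips entirely. Using extendability of geodesics (Corollary~\ref{cor:extgeod}, which needs the manifold hypothesis), the bounded convex function $B_\al=b_\al^++b_\al^-$ attached to each $\al\in\Ga^*\sm\{\id\}$ is forced to be constant, hence zero, hence the Busemann functions $b_\al^\pm$ are \emph{affine}. Cocompactness of $\Ga^*$ then provides enough affine functions to separate points, the Hitzelberger--Lytchak theorem identifies $X$ isometrically with an affine subspace of a normed space, and Schoenberg's theorem upgrades the norm to an inner product. If you want to keep the spirit of your plan, the salvageable part is the dimension count $\dim X=n$ via asphericity; but the geometric heart of the argument has to go through affine Busemann functions rather than flats.
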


The methods of the proof in connection with the
results in \cite{CS} give also the following general structure result
for abelian groups operating on Ptolemy spaces:

\begin{theorem} \label{thm:straight}
Let $X$ be a geodesic Ptolemy metric space and
let $\Ga \subset \ISO(X)$ act properly and cocompactly on $X$.
Then every free abelian subgroup of $\Ga$ is straight in $\Ga$.
\end{theorem}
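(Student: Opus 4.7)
The plan is to find an $A$-invariant flat $F\subset X$ on which $A$ acts by translations with discrete image, and then to conclude straightness via the \v{S}varc--Milnor lemma. Recall that straightness of $A\subset\Ga$ means the inclusion is a quasi-isometric embedding of word metrics. Since $X$ is proper geodesic and $\Ga$ acts properly cocompactly, the \v{S}varc--Milnor lemma makes $\Ga$ finitely generated and yields, for any fixed basepoint $x_0\in X$, a comparison $|a|_\Ga\asymp |x_0\,a(x_0)|$ up to an additive constant. It therefore suffices to produce a point $x_0$ for which $a\mapsto |x_0\,a(x_0)|$ is bi-Lipschitz to a word length on $A\cong\Z^k$.

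First I would verify that every nontrivial $a\in A$ is axial, i.e.\ the minimal displacement $d(a):=\inf_{x\in X}|x\,a(x)|$ is strictly positive and attained on a nonempty closed set $\operatorname{Min}(a)\subset X$. Attainment follows because $x\mapsto |x\,a(x)|$ is continuous and $\langle a\rangle$-invariant, so it descends to a continuous function on the compact quotient $\langle a\rangle\backslash X$ (using cocompactness of $\Ga$). Positivity is immediate: $d(a)=0$ would force $a$ to fix a point and hence, by properness of the $\Ga$-action, to have finite order, contradicting $a\neq 0$ in a free abelian group.

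The key step is to invoke the results of \cite{CS} on commuting axial isometries of geodesic Ptolemy spaces. I expect them to yield, for any finite family of commuting axial isometries, a common invariant flat $F\subset X$ -- an isometrically embedded Euclidean space -- on which each element acts by translation. Applied to a basis $a_1,\dots,a_k$ of $A$, this produces an $A$-invariant flat $F$ together with the translation-vector homomorphism $\ell:A\to F\cong\R^m$ satisfying $d(a)=\|\ell(a)\|$. Since the $\Ga$-action on $X$ is proper, its restriction to $A$ acting on $F$ is also proper, so $\ell(A)$ is a discrete subgroup of $\R^m$ of rank $k$; in particular the vectors $\ell(a_1),\dots,\ell(a_k)$ are $\R$-linearly independent and $\|\ell(\cdot)\|$ is bi-Lipschitz to any word metric on $A$. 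Picking $x_0\in F$ one then has $|x_0\,a(x_0)|=\|\ell(a)\|$, and combining with the \v{S}varc--Milnor estimate gives $|a|_\Ga\asymp |a|_A$, which is straightness.

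The main obstacle is the middle step. In a CAT$(0)$ space the existence of the common flat is the content of the Flat Torus Theorem, and for Ptolemy spaces the analogue must be built from the Ptolemy inequality applied to orbits of commuting axial isometries. Concretely, one needs that $\operatorname{Min}(a)$ splits isometrically as $\R\times Y$ with $a$ translating the $\R$-factor, and that commuting axial isometries leave each other's minimal sets invariant and act axially on them; iterating this produces the flat $F$. This Ptolemy version of the Flat Torus Theorem is precisely what I would hope \cite{CS} supplies, and where properness of the $\Ga$-action enters a second time to promote the resulting translation lattice from dense to discrete.
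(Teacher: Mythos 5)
Your opening and closing steps are sound and match what the paper sets up: the \v{S}varc--Milnor reduction of straightness to a lower bound on the orbit displacement $|x_0\,a(x_0)|$, and the verification that every nontrivial element of a free abelian subgroup attains a strictly positive minimal displacement on a nonempty closed set (this is parts (1)--(3) of the displacement lemma in Section 5, proved there via the unique-circumcenter Corollary \ref{cor:center}). The gap is exactly where you suspect it: the middle step. A Flat Torus Theorem for geodesic Ptolemy spaces is not available, and it is not what \cite{CS} supplies. The paper is explicit about this in Subsection \ref{subsec:outline}: even the flat \emph{strip} theorem is unknown for Ptolemy spaces, because the convexity of $t\mapsto d(c(t),c'(t))$ for two geodesics --- the engine behind the splitting $\operatorname{Min}(a)\cong\R\times Y$ and the Flat Torus Theorem in the $\CAT(0)$ setting --- is precisely the property one does not have here; only $d(p,\cdot)$ is known to be convex. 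Moreover, the existence of a $\Ga$-invariant flat is posed as an \emph{open question} at the end of the final section. So your plan rests on a structural statement that the authors state they cannot prove, and \cite{CS} (a paper on fundamental groups of compact manifolds without conjugate points, another setting with no flat torus theorem) cannot rescue it.

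What \cite{CS} actually provides, and what the paper imports, is an argument that runs entirely through Busemann functions of axes and never constructs a flat. The relevant facts are collected in Lemma \ref{lem:businvariance}: for $\al$ commuting with a nontorsion $\ga$, the function $b_{\ga}\circ\al-b_{\ga}$ is constant, and $b_{\ga}(\ga x)-b_{\ga}(x)=\min d_{\ga}>0$. Since $b_{\ga}$ is $1$-Lipschitz, the resulting additive characters on the abelian subgroup bound the orbit displacement from below linearly in the word length, which is the substance of Section III of \cite{CS}. The Ptolemy-specific input making $b_{\ga}$ well defined up to a constant (independent of the chosen axis) is Propositions \ref{prop:sublinearrays} and \ref{prop:sublinearlines}, that rays at sublinear distance define the same Busemann function up to a constant --- this uses the Ptolemy inequality directly, not any convexity of the two-variable distance. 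If you replace your flat-torus step by this Busemann-function argument, the proof goes through; as written, the key step would fail (or at least is an open problem).
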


To define the notion of straightness we recall the word norm
$|\ |_{\Ga}$ of a finitely generated group. Let $\{\ga_1,\ldots,\ga_k\}$ be
a set of generators of $\Ga$. Then for $\ga \in \Ga$, $|\ga|_{\Ga}$ is
defined as the length of the shortest word in the $\ga_i$ and $\ga_i^{-1}$
representing $\ga$. This norm depends on the choice of generators. However,
as it is easy to see, different sets of generators give rise to equivalent
norms. That is, if $|\ |^1_{\Ga}$ and $|\ |^2_{\Ga}$ are such norms,
then there exists a constant $c\ge 1$ such that
$(1/c)|\ga |^1_{\Ga} \le |\ga |^2_{\Ga}\leq c|\ga|^1_{\Ga}$ for all $\ga\in \Ga$.
A finitely generated subgroup $\Ga_0\subset \Ga$ is called
{\it straight} in $\Ga$, if $|\ |_{\Ga_0}$ and $|\ |_{\Ga}$ are equivalent norms on
$\Ga_0$. This notion is independent of the choice of generators on $\Ga_0$ and
$\Ga$.

As a consequence of Theorem \ref{thm:straight} we 
can generalize our result to the action of virtual polycyclic groups.
A group is called {\em polycyclic}, if it admits a finite chain of
subgroups $G_0\subset G_1\subset \cdots G_l$, such that
$G_0$ is the trivial subgroup and $G_l=G$, such that
$G_i/G_{i-1}$ is cyclic. Equivalently a group is polycyclic if
it is solvable and every subgroup is finitely generated.
A group is called {\em virtual polycyclic} if it contains a polycyclic
subgroup of finite index.
In particular crystallographic groups are virtual polycyclic.

\begin{corollary}\label{cor:solv}
Theorem \ref{thm:cryst} and Theorem \ref{thm:manifold}
hold also if $\Ga \subset \ISO(X)$ is assumed to be virtual
polycyclic.
In particular if a virtual polycyclic group acts properly and
cocompactly on a geodesic Ptolemy space then 
there exists a homomorhism
$\Ga \to \ISO(\R^n)$ with finite kernel such that
the image
is a crystallographic group. 
\end{corollary}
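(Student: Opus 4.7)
The plan is to reduce the virtual polycyclic case to the crystallographic case already handled by Theorems \ref{thm:cryst} and \ref{thm:manifold}, with Theorem \ref{thm:straight} together with the cited results of \cite{CS} serving as the bridge. First I would pass to a torsion-free polycyclic normal subgroup $\Ga_0 \subset \Ga$ of finite index, which exists by a standard fact about virtual polycyclic groups. As $[\Ga:\Ga_0]$ is finite, $\Ga_0$ still acts properly and cocompactly on $X$, so Theorem \ref{thm:straight} forces every free abelian subgroup of $\Ga_0$ to be straight in $\Ga_0$.

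The decisive step, and what I expect to be the main obstacle, is to deduce from this straightness condition that $\Ga_0$ is virtually abelian. This is the role played by the structural results of \cite{CS}. The underlying intuition is that in a polycyclic group which is not virtually abelian one can always exhibit a distorted free abelian subgroup -- the paradigm being the centre of the integer Heisenberg group, which is quadratically distorted. Ruling out distortion for \emph{every} free abelian subgroup via straightness therefore collapses the polycyclic structure down to a virtual $\Z^n$, with $n$ equal to the Hirsch length of $\Ga_0$.

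Once $\Ga$ is known to contain a normal free abelian subgroup $A\cong\Z^n$ of finite index, $A$ itself is a crystallographic group of rank $n$ acting properly and cocompactly on $X$, so Theorem \ref{thm:cryst} (respectively Theorem \ref{thm:manifold}) supplies an equivariant continuous rough isometry (respectively a genuine equivariant isometry) $\phi:X\to\R^n$. To extend the $A$-equivariance to full $\Ga$-equivariance, I would observe that for each $\ga\in\Ga$ the composition $\phi\circ\ga\circ\phi^{-1}$ is a rough isometry of $\R^n$ which conjugates the translation action of $A$ into itself; a standard Bieberbach-style rigidity argument upgrades such a map to an honest element of $\ISO(\R^n)$, yielding a homomorphism $\rho:\Ga\to\ISO(\R^n)$. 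Its kernel consists of elements which are bounded perturbations of the identity under $\phi$, hence is finite by properness of the $\Ga$-action on $X$ together with the rough isometry property of $\phi$. The image is discrete and cocompact, i.e.\ crystallographic. In the topological manifold case $\phi$ is already an isometry, and one recovers the stronger form of Theorem \ref{thm:manifold}.
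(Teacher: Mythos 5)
Your reduction to the crystallographic case is, in its first half, exactly the paper's argument: pass to a polycyclic subgroup of finite index, use Hirsch's theorem to find a torsion-free solvable finite-index subgroup $\Sigma$, invoke Theorem \ref{thm:straight} to see that every (automatically finitely generated, by polycyclicity) abelian subgroup of $\Sigma$ is straight, and then apply Lemma 4.3 of \cite{CS} to conclude that $\Sigma$, hence $\Ga$, contains a free abelian subgroup of finite index and finite rank; intersecting conjugates makes it normal. Your Heisenberg-distortion intuition for why \cite{CS} works is the right one. Where you diverge is the second half. The paper does \emph{not} apply Theorem \ref{thm:cryst} as a black box to the abelian subgroup $A$ and then extend equivariance to $\Ga$; it simply observes that the only property of a crystallographic group used in the proofs of Theorems \ref{thm:cryst} and \ref{thm:manifold} is the existence of a normal free abelian subgroup $\Ga^*$ of finite index and finite rank, so those proofs run verbatim for the virtually polycyclic $\Ga$. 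Crucially, the map $A_o:X\to\cE_o$ constructed there is $\ISO(X)$-equivariant by its very definition, so equivariance for all of $\Ga$ comes for free and no extension step is ever needed.

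Your extension step is where the genuine gap lies, and it only bites in the rough-isometry case (for Theorem \ref{thm:manifold} the map $\phi$ is an honest bijective isometry, $\phi\circ\ga\circ\phi^{-1}$ is literally an isometry of $\R^n$, and everything is immediate). When $\phi$ is only a rough isometry, $\phi^{-1}$ must be replaced by a quasi-inverse, and $F_\ga=\phi\circ\ga\circ\bar\phi$ is a rough isometry of $\R^n$ commuting with the $A$-translations only up to bounded error. It is true that such an $F_\ga$ lies at bounded distance from an affine isometry, but that isometry is \emph{not unique}: two isometries of $\R^n$ at bounded distance from each other have the same linear part but may differ by an arbitrary bounded translation. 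Consequently your assignment $\ga\mapsto\rho(\ga)$ is well defined only modulo bounded translations, and the identity $\rho(\ga_1\ga_2)=\rho(\ga_1)\rho(\ga_2)$ holds only up to a bounded translational defect; you have produced a quasi-homomorphism, not a homomorphism. Repairing this (for instance by averaging the translational cocycle over the finite quotient $\Ga/A$, or by rerunning Bieberbach--Zassenhaus theory for the induced quasi-action) is possible but is a real argument that ``a standard Bieberbach-style rigidity argument'' does not supply; one must also then verify that $\phi$ is $\rho$-equivariant up to an error bounded uniformly in $\ga$. The paper's route avoids all of this, which is precisely why it is phrased as a remark that the earlier proofs apply unchanged rather than as a deduction from the statements of the theorems.
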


At the end of the introduction we state a result, which we need in the proof
and which is of independent interest, see Section \ref{sec:convexity}.

\begin{theorem} \label{thm:sdc}
A proper, geodesic Ptolemy metric space is strictly distance convex.
\end{theorem}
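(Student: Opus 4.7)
The plan is to split the statement into distance convexity, which should follow directly from the Ptolemy inequality, and the strict improvement, which I expect to require properness in an essential way. Fix a geodesic $\gamma : [0,1] \to X$ with endpoints $y = \gamma(0)$, $v = \gamma(1)$, and a point $p \in X$; set $L = |yv|$. Applying \eqref{eqn:ptolemy} to the quadruple $(p, y, \gamma(t), v)$ and substituting $|y\gamma(t)| = tL$ and $|\gamma(t)v| = (1-t)L$, division by $L$ yields
\[
f(t) \; := \; |p\gamma(t)| \; \le \; (1-t)|py| \, + \, t|pv|,
\]
so $f$ is convex on $[0,1]$.

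Now suppose $f(t_0) = (1-t_0) f(0) + t_0 f(1)$ for some interior $t_0 \in (0,1)$; I must show that $\gamma$ is contained in a geodesic through $p$. A short convexity argument, applying the convexity estimate just obtained on the subinterval $[t, 1]$ (and symmetrically on $[0, t]$) and using the known value at $t_0$, upgrades the equality at $t_0$ to affine behaviour of $f$ on all of $[0, 1]$. Write $f(t) = f(0) + t s'$ with $s' = f(1) - f(0)$; by the triangle inequality $|s'| \le L$. If $|s'| = L$ the triangle equality at the endpoints of $\gamma$ forces either $y$ to lie between $p$ and $v$ on a geodesic, or $v$ to lie between $p$ and $y$, so that $\gamma$ is a subsegment of a geodesic through $p$ and we are done.

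The essential obstacle is to rule out the non-radial affine case $|s'| < L$. Observe that \eqref{eqn:ptolemy} applied to any four points drawn from $\{p\}\cup \gamma([0, 1])$ only reproduces the convexity bound on $f$ and therefore cannot on its own distinguish $|s'| < L$ from $|s'| = L$; the missing information must come from auxiliary points off $\gamma$. This is exactly where properness enters: using compactness of closed balls, I intend to produce such an auxiliary point $q$ (for instance as a limit of midpoints of geodesics from $p$ to points obtained by iterated midpoint constructions on $\gamma$, or as a limit of geodesic segments realizing a quasi-extension of $\gamma$) and then to apply \eqref{eqn:ptolemy} to a quadruple of the form $(p, \gamma(a), q, \gamma(b))$. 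Since $|s'| < L$ forces the triangle $(p, y, v)$ to be strictly non-degenerate, this augmented Ptolemy inequality should be strict in a way incompatible with the affine identity for $f$ already established, producing the required contradiction. Turning this compactness-plus-Ptolemy step into a quantitative estimate is the delicate part of the proof and is the place where the hypothesis that $X$ is proper is genuinely used.
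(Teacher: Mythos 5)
Your reduction is sound as far as it goes: distance convexity does follow from the Ptolemy inequality applied to the quadruple $p,\gamma(t),y,v$ in the appropriate order, the upgrade from equality at one interior point to affineness of $f$ on all of $[0,1]$ is routine, and the radial case $|s'|=L$ is correctly dispatched. You have also correctly diagnosed that quadruples drawn from $\{p\}\cup\gamma([0,1])$ alone can never yield strictness, and that properness must be used to manufacture auxiliary points. But the proposal stops exactly where the proof begins: the ``compactness-plus-Ptolemy step'' that you defer as ``the delicate part'' is the entire content of the theorem, and nothing in your sketch indicates how a strict inequality is actually extracted. In particular, it is not the case that a single application of the Ptolemy inequality to one quadruple $(p,\gamma(a),q,\gamma(b))$ with an auxiliary point $q$ becomes ``strict in a way incompatible with the affine identity'': the Ptolemy inequality is not strict for generic quadruples in a Ptolemy space, and no first-order (triangle-inequality level) information distinguishes $|s'|<L$ from $|s'|=L$. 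A contradiction has to come from comparing several Ptolemy inequalities at second order.

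For comparison, the paper reformulates strictness as the claim that, whenever $\bigl||px|-|py|\bigr|<|xy|$, some midpoint $m$ of $x,y$ satisfies $|pm|<\tfrac12(|px|+|py|)$. It takes points $x_n=\gamma_{px}(s-t_n)$ and $y_n=\gamma_{py}(s-t_n)$ on the geodesics from $p$ to $x$ and from $p$ to $y$ (not on the geodesic from $x$ to $y$), midpoints $m_n$ of $x_n,y_n$, and uses properness to extract a limit $m_n\to m$, which is a midpoint of $x$ and $y$. Assuming $|pm|=\tfrac12(|px|+|py|)$, it applies the triangle inequality and the Ptolemy inequality to the three quadruples $\{p,x_n,m,y_n\}$, $\{x_n,x,m,m_n\}$ and $\{m_n,m,y,y_n\}$, expands all distances to first order in $t_n$, and shows that the three resulting linear inequalities in the expansion coefficients are jointly inconsistent, via an elementary lemma on real numbers. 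Moreover, this computation only closes when $|px|$ and $|py|$ are nearly equal, so a separate normalization step is needed first: one uses the affineness of $f$ to pass to a short subsegment centered at the midpoint, along which the ratio $\eta=\bigl||px|-|py|\bigr|/|xy|$ is unchanged while the endpoint distances become nearly equal. None of these ingredients --- the specific choice of quadruples, the asymptotic expansion, the normalization --- appears in your proposal, so as written it is an accurate description of the difficulty rather than a proof.
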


It is a pleasure to thank Alexander Lytchak for many inspiring discussions.


\subsection{Outline of the main ideas, Ptolemy versus $\CAT(0)$} \label{subsec:outline}

All the above results are well known if the space $X$ is a $\CAT(0)$
space. A $\CAT(0)$-space is Ptolemy (see \cite{FLS}).
It is an open question if proper Ptolemy metric spaces are
$\CAT(0)$ (we remark that there are nonlocally compact Ptolemy spaces which are not $\CAT(0)$,
see \cite{FLS}).
To be very concrete, we do not even know, whether the flat strip theorem of $\CAT(0)$-spaces
(see e.g. \cite{BH} p.182) holds for Ptolemy metric spaces:
Assume that $d$ is a distance function on the set $X=\R\times [0,1]$, such that $(X,d)$ is a geodesic
Ptolemy space, the topology on $(X,d)$ 
coincides with the standard topology, the curves $t\mapsto (t,i)$ are
geodesics parameterized by arclength for $i=0,1$ and
$d((t,0),(t,1))\le A$ for some constant $A$. Question: Is $(X,d)$ isometric to a flat strip
$\R\times [0,D]$? We do not know the answer even under the additional assumption that the translation
map $T:X\to X$, $\,T(t,s)=(t+1,s)$ is an isometry.
In the case of $\CAT(0)$ spaces the flat strip theorem follows from the fact that the
distance function $t\mapsto d(c(t),c'(t))$ is convex for geodesics $c,c':[a,b] \to X$.
We also remark that the convexity of the function $t\mapsto d(c(t),c'(t))$ is the essential tool
to obtain the analogous results of Theorem \ref{thm:cryst} and Theorem \ref{thm:manifold} in
the $\CAT(0)$-case. 
Since we do not have this convexity, our proof in the Ptolemy case is very different.

We outline our main ideas:
Our proof relies on a weaker form of convexity. In geodesic Ptolemy metric spaces 
the distance function $d(p,.)$ to a point $p\in X$ is
convex. As a consequence also Busemann functions are convex.
We show (in the setting of Theorem \ref{thm:manifold})
that the Busemann function of an axis of an element $\al \in\Ga^*\subset \Ga$
(where $\Ga^*$ is the maximal abelian subgroup of $\Ga$)
is indeed an affine function. 
We can show that there are enough affine functions to separate points.
Thus we can apply the main theorem of \cite{HL} to obtain that $X$ is isometric to
a convex subset of a normed vector space. Finally we have to use the
Theorem of Schoenberg \cite{Sch} to see that $X$ is isometric to a Euclidean space.
In the setting of Theorem \ref{thm:cryst} we need suitable modifications.

We should mention that the result and parts of its proof have some analogy to the
"Hopf conjecture" and its proof in \cite{BI}. Compare in this context also \cite{Bu} and the
discussion in \cite{G},p.43 ff.


\subsection{ Motivation and outlook} \label{subsec:motivation}

At the end of this introduction we give an idea why we investigate
geodesic Ptolemy spaces. Our motivation was the study of the boundary of
$\CAT(-1)$ spaces.
On the ideal boundary $\partial_{\infty} Y$ of a $\CAT(-1)$-space $Y$,
Bourdon \cite{B} introduced a distance function
$\rho_o$, depending on a point $o\in Y$. A similar metric $\rho_{\omega}$
on $\d_{\infty} Y \setminus \{\om\}$
was introduced
by Hamenst\"adt \cite{Ha}, where the metric depends on a point $\om \in \d_{\infty}Y$ and
a Busemann function for this point.
The metrics $\rho_o, \rho_{\om}$ are all M\"obius equivalent.
For a detailed description compare \cite{FS}.

Let us describe the elements of M\"obius geometry on metric spaces.
Let $X$ be a metric space, where $|xy|$ is the distance of $x$ and $y$.
To a quadruple $(x,y,z,w)\in X^4\setminus D$ (where $D$ is the diagonal of points where one entry appears
3 or 4 times), we associate the {\em cross ratio triple}
$$\crt(x,y,z,w)= (|xy|\,|zw|:|xz|\,|yw|:|xw|\,|yz|) \in \R P^2.$$
Similar to the classical crossratio there are six reasonable definitions by permuting the entries.
A map $f:X\to Y$ is called a {\em M\"obius map}, if it is injective and for all quadruples from
$X^4 \setminus D$ we have
$\crt(f(x),f(y),f(z),f(w))=\crt(x,y,z,w)$. This definition 
is equivalent to the usual definition using the
classical cross ratio.
Let $\Delta \subset \R P^2$ be the subset of points $(a:b:c)$, such that the three numbers
$|a|,|b|,|c| $ satisfy the triangle inequality.
Then $X$ is a Ptolemy metric space, if and only if for all allowed quadruples
$\crt(x,y,z,w)\in \Delta$.
Therefore the Ptolemy condition is in a natural way a M\"obius invariant condition.
In a certain sense the Ptolemy condition is the natural condition for
involutive geometry: note that a metric space $X$ is Ptolemy if and only if for
all points $x\in X$ the map $\rho_x:X\setminus \{x\} \times X\setminus \{x\} \to \R$,
$\rho_x(y,z)=\frac{|yz|}{|yx|\,|zx|}$ is a metric (see also \cite{BFW}).

In \cite{FS} we made the observation that the Bourdon and Hamenst\"adt metrics on
$\d_{\infty} Y$ are Ptolemy for $\CAT(-1)$-spaces $Y$ and the isometries 
of $Y$ extend to
M\"obius maps on $\d_{\infty} Y$.
In Ptolemy metric spaces one can naturally define the notion
of a circle (in the generalized sense, i.e. a circle or a line).
Recall that
the classical Ptolemy theorem states that four points $x,y,z,w$ (in the
Euclidean plane or more generally in $\R^n$) lie
on a circle (or a line) if and only if
$\crt(x,y,z,w) \in \d \De$, where $\d\De$ is the set of points
$(a:b:c)$ with equality in the triangle inequality.
Thus let us define

\begin{definition}
Let $X$ be a Ptolemy space. A {\em circle} in
$X$ is a continuous map
$f:S^1\to X$, such that for all
$x,y,z,w \in S^1$
we have equality in the Ptolemy inequality, i.e.
$\crt(f(x),f(y),f(z),f(w)) \in \d \De$.
\end{definition}

It then turns out that for points
$x,y,z,w \in S^1$,
which are ordered cyclically on $S^1$
(i.e. $y$ and $w$ lie in different components
of $S^1\setminus\{x,z\}$), we have equality of the form
$|f(x)f(y)|\,|f(z)f(w)| +|f(y)f(z)|\,|f(w)f(x)|=|f(x)f(z)|\,|f(w)f(y)|$.
Note that this notion of a circle is obviously M\"obius invariant and
thus circles are preserved by M\"obius maps.

Now one can reformulate a result of Bourdon (see \cite{B}) in the following way:
Let $Y$ be a $\CAT(-1)$-space and 
consider $\d_{\infty} Y$ as a Ptolemy space with some Bourdon or
Hamenst\"adt metric. Let $f:S^1\to \d_{\infty} Y$ be a circle,
then there exists a totally geodesic subspace $Y_0\subset Y$ isometric to
$\Hy^2$, such that $\d_{\infty} Y_0$ is the image of $f$. Indeed the 
circles in $\d_{\infty}Y$
correspond $1-1$ to the boundaries of 
isometrically embedded hyperbolic planes in $Y$.

This is one motivation to study Ptolemy spaces with many circles.
If we look to our definition of a circle, a circle through infinity
should just be a proper map $f:\R \to X$ such that
for all allowed quadruples $x,y,z,w \in \R$ we have
$\crt(f(x),f(y),f(z),f(w)) \in \d \De$.
For arbitrary but fixed $x,y,z$ and $w \to \infty$ this implies that
$(|f(x)f(y)|:|f(x)f(z)|:|f(y)f(z)|) \in \d \De$.
Equivalently, this condition says that
$f:\R \to X$ is a geodesic line (not necessarily parameterized proportionally to arclength).
From this viewpoint it is also natural to study Ptolemy spaces with
many lines.



\section{Convexity} \label{sec:convexity}

Let $X$ be a metric space.
By $|xy|$ we denote the distance between points
$x,y \in X$.
Unless otherwise stated, we will parameterize geodesics proportionally to arclength. Thus
a geodesic in $X$ is a map
$c:I\to X$ with
$|c(t)c(s)|= \la |t-s|$ for all $s,t \in I$ and some constant $\la \ge 0$.
A metric space is called geodesic if every pair of points can be joined by a geodesic.
In the sequal $X$ will always denote a geodesic metric space.
For $x,y \in X$ we denote by
$m(x,y)=\{z\in X \mid\, |xz|=|zy|=\frac{1}{2}|xy| \}$ the set of midpoints of $x$ and $y$.
A subset $C\sub X$ is {\em convex}, if for $x,y \in C$ also
$m(x,y)\sub C$.

A function $f:X \to \R$ is {\em convex} (resp. {\em affine}), if for all
geodesics $c:I\to X$ the map
$f\circ c:I \to \R$ is convex (resp. affine).

The space $X$ is called {\em distance convex} if for all
$p\in X$ the distance function
$d_p=|\cdot \ p|$ to the point $p$ is convex.
It is called {\em strictly distance convex}, if the functions $t \mapsto (d_p\circ c)(t)$ are 
strictly convex whenever $c:I \to X$ is a geodesic with
$|c(t)\,c(s)|>||p\,c(t)|-|p\,c(s)||$ for all $s,t\in I$.
This  definition is natural, since
the restriction of $d_p$ to a geodesic segment containing $p$ is never strictly convex.
The Ptolemy property easily implies (see \cite{FLS}): 

\begin{lemma} \label{lem-distance-convex}
A geodesic, Ptolemy metric space is distance convex.
\end{lemma}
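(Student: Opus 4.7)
The plan is to reduce convexity to midpoint convexity (justified because the composition $d_p \circ c$ is continuous, indeed Lipschitz, on the interval $I$), and then to obtain the midpoint inequality from a single application of the Ptolemy inequality \eqref{eqn:ptolemy}.

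First I would fix an arbitrary geodesic $c:I\to X$, parameterized proportionally to arclength, and two parameters $s,t\in I$. Set $x=c(s)$, $y=c(t)$, and let $m=c(\tfrac{s+t}{2})$. By the assumption on the parameterization, $m$ is a midpoint of $x$ and $y$, i.e. $|xm|=|my|=\tfrac{1}{2}|xy|$. I would then apply the Ptolemy inequality to the quadruple $(p,m,x,y)$ with the pairing that makes $|pm|\,|xy|$ appear on the left-hand side:
\[
|pm|\,|xy| \;\le\; |px|\,|my| \;+\; |py|\,|mx|.
\]
Substituting $|mx|=|my|=\tfrac{1}{2}|xy|$ and, in the case $x\neq y$, dividing by $|xy|>0$ gives
\[
|pm| \;\le\; \tfrac{1}{2}\bigl(|px|+|py|\bigr),
\]
which is exactly midpoint convexity of $t\mapsto|p\,c(t)|$ between the parameters $s$ and $t$. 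The degenerate case $x=y$ is trivial.

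Since a continuous, midpoint-convex real-valued function on an interval is convex, this establishes that $d_p\circ c$ is convex for every geodesic $c$, and hence $d_p$ is convex in the sense defined above. There is really no main obstacle: the only subtle point is the correct bookkeeping in choosing how to assign the four points of Ptolemy's inequality so that the product containing $|xy|$ (which cancels) appears on the short side. The continuity needed to upgrade midpoint convexity to full convexity is automatic, as $d_p$ is $1$-Lipschitz by the triangle inequality and $c$ is Lipschitz by the parameterization assumption.
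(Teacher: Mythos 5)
Your proof is correct and is exactly the standard argument: the paper itself defers this lemma to \cite{FLS}, where the proof is precisely this single application of the Ptolemy inequality to the quadruple $(p,m,x,y)$, followed by cancellation of $|xy|$ and the upgrade from midpoint convexity to convexity via continuity. No issues.
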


As a consequence, we obtain that for Ptolemy metric spaces local geodesics
are geodesics. Here we call a map $c:I\to X$ a {\em local geodesic}, if
for all $t\in I$ there exists a neighborhood $t\in I'\subset I$, such that
$c_{|I'}$ is a geodesic.

\begin{lemma} \label{lem:localglobalgeodesic}
If $X$ is distance convex, then every local geodesic
is globally minimizing.
\end{lemma}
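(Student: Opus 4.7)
The plan is a standard open-closed argument on the set of parameters where the local geodesic is already globally minimizing, with distance convexity used to bootstrap local minimization across an endpoint.

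First I would normalize: since $c:I\to X$ is locally a geodesic, the local speed constant $\la$ from the definition of a geodesic is locally constant in $t$, hence constant on the connected interval $I$, so I may reparameterize and assume $\la=1$. Fix $a\in I$, pick any $b>a$ in $I$, and set $p=c(a)$ and
$$T=\{t\in[a,b]\mid |c(a)c(t)|=t-a\}.$$
The goal is $T=[a,b]$. Clearly $a\in T$, and $T$ contains a neighborhood of $a$ by local geodesicity. $T$ is closed by continuity of $c$, so the real content is openness to the right: if $t_0\in T$ with $t_0<b$, I need to produce $\de>0$ with $[t_0,t_0+\de]\sub T$.

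For this, choose $\eta>0$ with $\eta\le t_0-a$ so small that $c$ restricted to $[t_0-\eta,t_0+\eta]$ is a geodesic, and consider $f=d_p\circ c$. On $[t_0-\eta,t_0]$ we have $f(s)=s-a$ by the assumption $t_0\in T$, so $f$ is affine with slope $1$ there; meanwhile, since $c_{|[t_0-\eta,t_0+\eta]}$ is a genuine geodesic, Lemma \ref{lem-distance-convex} tells us $f$ is convex on this whole interval. Convexity forces the right slope at $t_0$ to be at least the left slope, so for any $t\in(t_0,t_0+\eta]$,
$$f(t)\ge f(t_0)+(t-t_0)=t-a.$$
The triangle inequality gives the opposite bound $f(t)\le f(t_0)+|c(t_0)c(t)|=t-a$, hence $f(t)=t-a$ and $t\in T$.

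Thus $T$ is open, closed, and nonempty in $[a,b]$, so $T=[a,b]$; since $b\in I$ was arbitrary, $|c(a)c(t)|=t-a$ for all $t\ge a$ in $I$, and by symmetry $c$ is a geodesic on all of $I$. The only nonroutine step is checking that $f$ is convex across $t_0$; that is precisely where distance convexity is essential, since we cannot directly invoke convexity of $f$ along the not-yet-verified global geodesic on $[a,t_0+\eta]$, but only along the local piece where $c$ is known to be a geodesic — and once the two one-sided slopes are compared through that local convex piece, the triangle inequality closes the argument.
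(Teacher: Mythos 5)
Your proof is correct and is essentially the paper's argument in a slightly more explicit form: the paper takes the first parameter $a$ where minimality fails and applies distance convexity of $d_p$ to the short genuine geodesic $c_{|[a-\ep,a+\ep]}$ to force $|p\,c(a)|<a$, a contradiction, which is exactly your openness step (midpoint convexity there versus your one-sided slope comparison plus the triangle inequality). The extra bookkeeping you include (constancy of the speed $\la$, closedness and downward-closedness of $T$) is fine and consistent with what the paper leaves implicit.
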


\begin{proof}
Assume the contrary, then there is a local
geodesic $c:[0,b] \to X$,
such that there exists $a \in (0,b)$ such that for $p= c(0)$
we have
$|p\,c(a-\ep)| = a- \ep$ for all $\ep \ge 0$ but $|p\,c(a+\ep)| < a+\ep$ 
for $\ep > 0$. The distance convexity applied to $d_p$ and
the minimizing geodesic $c_{|[a-\ep,a+\ep]}$ implies
that $|pc(a)| < a$, a contradiction.
\end{proof}

In \cite{FLS} we gave examples of Ptolemy metric spaces which are not strictly
distance convex.
However, if the space is proper, then the situation is completely different. \\

\noindent {\bf Theorem} \ref{thm:sdc}. 
{\it A proper, geodesic Ptolemy metric space is strictly distance convex.} \\

Before we prove this result, we derive some consequences. The first corollary
has already been proved in \cite{FLS}.

\begin{corollary} \label{cor:uniquemidpoint} (see also \cite{FLS})
Let $X$ be a proper, geodesic Ptolemy space.
Then for $x,y \in X$ there exists a unique midpoint
$m(x,y)\in X$.
The midpoint function
$m:X\times X\to X$ is continuous.
\end{corollary}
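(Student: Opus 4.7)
Existence of a midpoint is immediate: since $X$ is geodesic, any geodesic from $x$ to $y$ has a midpoint at its halfway parameter. The plan is to establish uniqueness using Theorem \ref{thm:sdc}, and then derive continuity from uniqueness together with properness.

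For uniqueness, I would argue as follows. Suppose $z_1, z_2$ are both midpoints of $x,y$, and let $\gamma:[0,L]\to X$ be a geodesic from $z_1$ to $z_2$ parameterized by arclength, so $L=|z_1z_2|$. Applying Lemma \ref{lem-distance-convex} (distance convexity) to the functions $d_x$ and $d_y$ along $\gamma$, both $t\mapsto|x\gamma(t)|$ and $t\mapsto|y\gamma(t)|$ are convex with equal endpoint values $\tfrac12|xy|$, hence both are bounded above by $\tfrac12|xy|$. The triangle inequality then forces
\[
|x\gamma(t)|=|\gamma(t)y|=\tfrac12|xy|\qquad\text{for every }t\in[0,L],
\]
so every point of $\gamma$ is a midpoint and $t\mapsto|x\gamma(t)|$ is constant. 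If $L>0$, then $|\gamma(s)\gamma(t)|=|s-t|>0=\bigl||x\gamma(s)|-|x\gamma(t)|\bigr|$ for all $s\ne t$, so by Theorem \ref{thm:sdc} the function $t\mapsto|x\gamma(t)|$ is strictly convex, contradicting its being constant. Hence $L=0$ and $z_1=z_2$.

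For continuity, let $x_n\to x$, $y_n\to y$ and write $m_n:=m(x_n,y_n)$. The estimate $|xm_n|\le|xx_n|+\tfrac12|x_ny_n|$ shows $\{m_n\}$ stays in a bounded set, so by properness of $X$ every subsequence of $(m_n)$ has a convergent sub-subsequence. If $m_{n_k}\to w$, continuity of the distance gives
\[
|xw|=\lim_k|x_{n_k}m_{n_k}|=\lim_k\tfrac12|x_{n_k}y_{n_k}|=\tfrac12|xy|,
\]
and likewise $|wy|=\tfrac12|xy|$, so $w$ is a midpoint of $x,y$. By the uniqueness established above, $w=m(x,y)$, so the whole sequence $m_n$ converges to $m(x,y)$.

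The only genuinely nontrivial ingredient is the reduction to a constant distance function in the uniqueness step; everything else is a formal chase of the triangle inequality plus the standard subsequential-limit argument enabled by properness. The key idea I rely on is that strict distance convexity (Theorem \ref{thm:sdc}) rules out a nonconstant flat behavior of $d_x$ along any nontrivial geodesic, so the "set of midpoints of $x$ and $y$" cannot contain two distinct points connected by a geodesic.
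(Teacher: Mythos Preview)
Your proof is correct and follows essentially the same route as the paper's: both arguments take a geodesic between two putative midpoints and derive a contradiction from Theorem~\ref{thm:sdc}, and both obtain continuity by the standard properness/subsequence argument. Your version is in fact slightly more careful, since you first use weak distance convexity to show $t\mapsto |x\gamma(t)|$ is constant, which cleanly verifies the hypothesis $|\gamma(s)\gamma(t)|>\bigl||x\gamma(s)|-|x\gamma(t)|\bigr|$ needed to invoke strict distance convexity along the whole segment; the paper applies Theorem~\ref{thm:sdc} directly to the midpoint $m$ of $m_1,m_2$ to get $|xm|,|ym|<\tfrac12|xy|$ and contradicts the triangle inequality.
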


\begin{proof}
Assume that there are 
midpoints
$m_1,m_2 \in m(x,y)$. Let
$m$ be a midpoint of $m_1$ and $m_2$.
If $m_1\ne m_2$, then the strict distance convexity according to Theorem \ref{thm:sdc}
implies
$|x m| < |x m_i|=\frac{1}{2}|xy|$, and
$|y m| < |y m_i|=\frac{1}{2}|xy|$, which is a contradiction.
The uniqueness of the midpoint function together with the properness
easily imply the continuity of $m$. 
\end{proof}

\begin{corollary} \label{cor:center}
Let $X$ be a proper, geodesic Ptolemy space, and let $K \subset X$ be a compact 
subset. Then there exists a unique closed ball $B_r(p)$ of minimal radius, such that
$K \subset B_r(p)$.
\end{corollary}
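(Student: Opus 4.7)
The plan is to split the proof into existence and uniqueness. For existence I would introduce the circumradius function $r: X \to \R$, $r(p) = \sup_{x \in K} |xp|$, which is finite since $K$ is compact and $1$-Lipschitz by the triangle inequality. Setting $r_0 = \inf_{p \in X} r(p)$ and picking a minimizing sequence $\{p_n\}$, one fixes an arbitrary $x_0 \in K$: the estimate $|p_n x_0| \le r(p_n)$ keeps $\{p_n\}$ bounded, so by properness a subsequence converges to some $p \in X$, and continuity of $r$ yields $r(p) = r_0$; in particular $K \subset \bar B_{r_0}(p)$.

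For uniqueness, suppose two distinct minimizers $p_1 \ne p_2$ both achieve $r_0$. Let $c: [0,1] \to X$ be a geodesic from $p_1$ to $p_2$ and $m = c(1/2)$ its (unique, by Corollary \ref{cor:uniquemidpoint}) midpoint. The goal is to show $r(m) < r_0$, contradicting the definition of $r_0$. For fixed $x \in K$ the function $f(t) := |x\,c(t)|$ is convex by Lemma \ref{lem-distance-convex}, so $f(1/2) \le \frac{1}{2}(f(0) + f(1)) \le r_0$. Suppose instead $f(1/2) = r_0$. Then both inequalities must be equalities, so $f(0) = f(1) = r_0$ and the chord from $(0, f(0))$ to $(1, f(1))$ touches the graph of $f$ at $t = 1/2$; a standard fact about convex functions forces $f$ to be affine, and hence identically $r_0$, on $[0,1]$. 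But with $f \equiv r_0$ and $L := |p_1 p_2| > 0$, the hypothesis $||x\,c(s)| - |x\,c(t)|| = 0 < L|s - t| = |c(s)c(t)|$ of Theorem \ref{thm:sdc} is satisfied for all $s \ne t$, so $f$ would have to be strictly convex, contradicting $f \equiv r_0$. Hence $|xm| = f(1/2) < r_0$.

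Since $K$ is compact and $x \mapsto |xm|$ is continuous, $r(m) = \max_{x \in K} |xm|$ is attained and is therefore strictly less than $r_0$, the desired contradiction. The main subtlety I expect is precisely this uniqueness step: strict distance convexity cannot be invoked directly on the geodesic $c$, since its hypothesis need not hold a priori; one must first promote the tangency of the chord at the midpoint to full constancy of $f$ via ordinary convexity, and only then is Theorem \ref{thm:sdc} available to close the argument.
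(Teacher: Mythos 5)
Your proof is correct and follows essentially the same route as the paper: existence via properness, and uniqueness by showing that the midpoint of two putative centers would have strictly smaller circumradius (strict distance convexity plus compactness of $K$). Your intermediate step --- using ordinary convexity to force $f\equiv r_0$ on the segment before invoking Theorem \ref{thm:sdc}, so that its hypothesis $|c(s)c(t)|>\bigl||x\,c(s)|-|x\,c(t)|\bigr|$ is actually verified --- is a detail the paper's one-line uniqueness argument glosses over, and you handle it correctly.
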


\begin{proof}
The existence of such a ball of minimal radius is clear. We have to prove the uniqueness.
Assume that $K \subset B_r(p_1)\cap B_r(p_2)$, with $p_1\ne p_2$.
Let $m$ be the midpoint of $p_1$ and $p_2$. 
If $q \in K$, then
$|qp_i| \le r$ and the strict convexity of the
distance function implies
$|qm| < r$. By compactness $K\subset B_{r'}(m)$ for some $r'<r$ contradicting
the minimality of $r$.
\end{proof}

\begin{corollary} \label{cor:projection}
Let $X$ be a proper, geodesic Ptolemy space, and $A \subset X$ be a closed and
convex subset.
Then there exists a continuous
projection
$\pi:X\to A$.
\end{corollary}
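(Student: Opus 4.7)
The plan is to define $\pi(x)$ as the (necessarily unique) nearest point of $A$ to $x$. \emph{Existence} of a nearest point follows from properness: given $x\in X$, set $d=\dist(x,A)$ and choose $a_n\in A$ with $|xa_n|\to d$. Eventually $a_n$ lies in the closed ball $\ov{B}_{d+1}(x)$, which is compact by properness, so a subsequence converges to some $a$; since $A$ is closed, $a\in A$, and $|xa|=d$.

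\emph{Uniqueness} is the main step. Suppose $a_1\ne a_2$ in $A$ both realize $|xa_i|=d$; then necessarily $d>0$. Let $c:[0,1]\to X$ be a geodesic from $a_1$ to $a_2$; by Corollary \ref{cor:uniquemidpoint} the point $m:=c(1/2)$ is the unique midpoint of $a_1,a_2$, and by convexity of $A$ it lies in $A$. The function $f(t):=|x\,c(t)|$ is convex by Lemma \ref{lem-distance-convex} with $f(0)=f(1)=d$, hence $f\le d$ on $[0,1]$. If $f(1/2)<d$, this contradicts the minimality of $d$, since $m\in A$. If instead $f(1/2)=d$, a standard property of convex functions forces $f\equiv d$ on $[0,1]$; but then $|c(s)c(t)|>0=\bigl|\,|xc(s)|-|xc(t)|\,\bigr|$ for all $s\ne t$, so the hypothesis in the definition of strict distance convexity is satisfied, and Theorem \ref{thm:sdc} forces $f$ to be strictly convex, contradicting $f\equiv d$. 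Hence $a_1=a_2$, and $\pi(x)$ is well defined.

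For \emph{continuity}, let $x_n\to x$. Since $\dist(\cdot,A)$ is $1$-Lipschitz, $|\pi(x_n)\,x|\le \dist(x_n,A)+|x_n\,x|$ is bounded, so by properness $(\pi(x_n))$ lies in a compact set. Any convergent subsequence $\pi(x_{n_k})\to a$ satisfies $a\in A$ and $|xa|=\lim\dist(x_{n_k},A)=\dist(x,A)$, hence $a=\pi(x)$ by uniqueness; so $\pi(x_n)\to\pi(x)$. The delicate point is the uniqueness step: the strict-convexity hypothesis of Theorem \ref{thm:sdc} is not automatic for an arbitrary geodesic, which is why the argument passes through the convex-function dichotomy $f(1/2)<d$ or $f\equiv d$ before invoking Theorem \ref{thm:sdc}, at which point the hypothesis is trivially true.
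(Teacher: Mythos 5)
Your proposal is correct and follows essentially the same route as the paper, which obtains uniqueness of the nearest point from strict distance convexity applied to the midpoint of two hypothetical minimizers (as in the proof of Corollary \ref{cor:center}) and then deduces continuity from uniqueness and properness. Your extra dichotomy ($f(1/2)<d$ versus $f\equiv d$) is a careful and valid way to verify the hypothesis of Theorem \ref{thm:sdc}, which the paper leaves implicit.
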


\begin{proof}
The strict convexity of the distance function easily implies
with a similar argument as in the proof above, that
for given $x\in X$, there exists a unique $a\in A$, with $|ax|$ minimal.
One easily checks that this projection is continuous.
\end{proof}

\begin{remark}
For $\CAT(0)$ spaces this projection is always $1$-Lipschitz.
We do not know if $\pi$ is
$1$-Lipschitz for general proper geodesic Ptolemy spaces.
\end{remark}

The strict convexity of the distance function
together with the properness implies
easily

\begin{corollary}
Let $X$ be a proper, geodesic Ptolemy
space and let $x,y \in X$.
Then there exists a unique geodesic
$c_{xy}:[0,1]\to X$ from $x$ to $y$ and
the map
$X\times X\times [0,1] \to X$,
$(x,y,t)\mapsto c_{xy}(t)$ is continuous.
\end{corollary}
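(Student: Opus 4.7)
The plan is to deduce both assertions from strict distance convexity (Theorem~\ref{thm:sdc}) together with properness and a standard Arzel\`a--Ascoli compactness argument.

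For uniqueness, I would suppose two constant-speed geodesics $c_1,c_2\colon[0,1]\to X$ from $x$ to $y$ disagree at some $t\in(0,1)$. Since $|x\,c_1(t)|=|x\,c_2(t)|=t|xy|$, any geodesic $\si$ joining $c_1(t)$ and $c_2(t)$ satisfies the non-degeneracy hypothesis $|\si(s)\si(r)|>||x\,\si(s)|-|x\,\si(r)||$ that triggers the strict form of distance convexity (its endpoints have equal $d_x$-value). Theorem~\ref{thm:sdc} then forces the midpoint $m$ of $c_1(t),c_2(t)$ to satisfy $|xm|<t|xy|$, and the analogous argument with $d_y$ gives $|ym|<(1-t)|xy|$. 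Adding these contradicts $|xy|\le|xm|+|my|$.

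For continuity at a point $(x_0,y_0,t_0)$, I would take sequences $x_n\to x_0$, $y_n\to y_0$, $t_n\to t_0$ and study the geodesics $c_n:=c_{x_ny_n}$, each $|x_ny_n|$-Lipschitz. Since the lengths $|x_ny_n|$ are uniformly bounded, the images of all $c_n$ lie in a fixed closed ball around $x_0$, which is compact by properness. Arzel\`a--Ascoli then supplies a uniformly convergent subsequence; passing to the limit in the identity $|c_n(s)c_n(r)|=|x_ny_n|\,|s-r|$ identifies any limit curve as a constant-speed geodesic from $x_0$ to $y_0$, and the uniqueness already proved pins it down as $c_{x_0y_0}$. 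A standard subsequence-of-subsequences argument then upgrades this to uniform convergence of the full sequence $c_n\to c_{x_0y_0}$, whence $c_n(t_n)\to c_{x_0y_0}(t_0)$ follows from the uniform Lipschitz bound.

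The only real subtlety is the verification of the hypothesis in the definition of strict distance convexity; this is the point where ordinary distance convexity (Lemma~\ref{lem-distance-convex}) would be insufficient, since it would merely force the midpoint set $m(x,y)$ to be convex rather than a singleton, and Corollary~\ref{cor:uniquemidpoint} alone would not rule out distinct geodesics sharing midpoints. Everything else is soft: existence of at least one geodesic is built into the definition of geodesic space, and the Arzel\`a--Ascoli step is a textbook application given that closed balls in $X$ are compact.
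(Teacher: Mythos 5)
Your proof is correct and follows exactly the route the paper intends: the authors state this corollary without proof, remarking only that it follows ``easily'' from strict distance convexity (Theorem \ref{thm:sdc}) together with properness, and your uniqueness argument is the same midpoint contradiction used in Corollary \ref{cor:uniquemidpoint}, while the Arzel\`a--Ascoli step is the standard way to get continuity once uniqueness is known. You also correctly flag the one genuine subtlety (that equality of $d_x$ at the endpoints of the connecting geodesic suffices to trigger the strict-convexity hypothesis, since by ordinary convexity and the Lipschitz bound a degenerate interior pair would propagate to the endpoints), which the paper itself glosses over.
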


As a consequence $X$ is contractible.
Indeed for the choice of a basepoint
$o\in X$ the map
$\rho:X\times [0,1] \to X$, $\rho_t(x)=c_{ox}(t)$
is a contraction.

\begin{corollary} \label{cor:extgeod}
Let $X$ be a complete, geodesic Ptolemy space.
If $X$ is a topological manifold,
then every geodesic is extendable.
Thus for every geodesic segment
$c:I\to X$ there exists a geodesic
$\bar c :\R \to X$ with
$\bar c_{|I}=c$.
\end{corollary}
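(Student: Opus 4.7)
My plan is to reduce everything to showing that a given geodesic segment $c\colon[0,b]\to X$ with endpoint $p=c(b)$ can always be prolonged a bit past $p$. Symmetry handles the other endpoint, and iteration together with completeness then produces the geodesic on all of $\R$. Such a local prolongation is automatically a geodesic after concatenation by Lemma~\ref{lem:localglobalgeodesic}. Since $X$ is a topological manifold it is locally compact, and together with completeness and the geodesic property this forces $X$ to be proper (cf.\ \cite{BH}, p.~132), so the whole of Section~\ref{sec:convexity} applies.

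For the local prolongation past $p$, fix $\tau\in(0,b]$, set $q=c(b-\tau)$, and for $t\in[1/2,1]$ consider the map $F_t\colon X\to X$, $F_t(x)=c_{qx}(t)$, where $c_{qx}$ is the unique geodesic from $q$ to $x$ parameterized on $[0,1]$; joint continuity in $(x,t)$ is the content of the corollary immediately preceding Corollary~\ref{cor:extgeod}. Thus $F_1=\id_X$ and $F_{1/2}(x)=m(q,x)$. Any preimage $x_{*}\in F_{1/2}^{-1}(p)$ satisfies $|qp|=|px_{*}|=\tfrac12|qx_{*}|$, forcing $p$ onto the unique geodesic $c_{qx_{*}}$; reparameterizing that geodesic by arclength then provides a local geodesic at $p$ extending $c$, which is a global geodesic by Lemma~\ref{lem:localglobalgeodesic}. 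So everything reduces to showing $p\in F_{1/2}(X)$.

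Here is where the topological manifold hypothesis enters, via degree theory. Each $F_t$ with $t>0$ is proper because $|qF_t(x)|=t\,|qx|$; the same estimate makes the homotopy $H\colon X\times[1/2,1]\to X$, $H(x,t)=F_t(x)$, proper, since if $H(x,t)\in K$ with $R=\sup_{y\in K}|qy|$ then $|qx|\le 2R$, putting $x$ into the compact ball $\overline{B}_{2R}(q)$. Now $X$ is connected (being geodesic) and a topological $n$-manifold, so by proper-homotopy invariance of the degree (taken mod~$2$ in the non-orientable case) we get $\deg F_{1/2}=\deg F_1=1$. A proper continuous map of nonzero degree between connected $n$-manifolds is surjective---otherwise the local degree at the missed point would vanish, contradicting the nonvanishing of the global degree---hence $p\in F_{1/2}(X)$. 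The main obstacle is this degree computation; the key trick is the choice of parameter range $[1/2,1]$, since the natural extension through $t\in[0,1]$ would end at the constant map $F_0\equiv q$, destroying properness and hence the invariance of the degree.
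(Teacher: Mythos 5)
Your argument is correct, but it is genuinely different from the one in the paper. The paper argues by contradiction at a hypothetical non-extendable endpoint $x=c(a)$: writing $o=c(0)$, non-extendability (plus uniqueness of geodesics) means $x$ never lies on the interior of a segment $c_{oy}$, so the geodesic contraction $\rho_t(y)=c_{oy}(t)$ restricts to a contraction of $X\setminus\{x\}$; both $X$ and $X\setminus\{x\}$ being contractible kills the local homology $H_n(X,X\setminus\{x\};\Z)\cong\Z$ of the manifold. You instead prove the positive statement that the midpoint map $F_{1/2}(x)=m(q,x)$ is onto, via properness of the homotopy $F_t$ on $t\in[1/2,1]$ and proper-homotopy invariance of the (mod~$2$, if need be) degree; surjectivity of $F_{1/2}$ then hands you the extension directly. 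Both routes rest on the same foundations from Section~\ref{sec:convexity} (properness via Hopf--Rinow, uniqueness and joint continuity of geodesics) and both invoke the manifold hypothesis through an algebraic-topological invariant. The paper's version is the lighter one -- it needs only contractibility and the long exact sequence of the pair $(X,X\setminus\{x\})$ -- whereas yours imports degree theory with compact supports for possibly non-compact, non-orientable topological manifolds; in exchange your argument is constructive in spirit (every point of $X$ is exhibited as a midpoint $m(q,x_*)$) rather than a pure contradiction, and your observation about restricting the homotopy parameter to $[1/2,1]$ to preserve properness is exactly the right point to be careful about. Two small things to add for completeness: dispose of constant geodesics separately, and note explicitly that $c_{qx_*}|_{[0,1/2]}$ coincides with $c|_{[b-\tau,b]}$ by uniqueness of the geodesic from $q$ to $p$, which is what makes the concatenation a local geodesic at $p$.
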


\begin{proof}
Let $c:I \to X$ be a geodesic defined on some
interval $I \subset \R$,
which can not be extended.
We have to show that $I =\R$.
If $c$ is a constant map, then it can be extended trivially.
Thus
$I$ consists of more than one point and $c$
is not a constant geodesic.
Since
$X$ is complete, we see that $I$ is (after obvious reparameterization)
of the form $[0,a]$ with $a > 0$ or of the form
$(-\infty,a]$ for some $a>0$. In both cases the geodesic
$c_{|[0,a]}:[0,a] \to X$ can not be extended
over its endpoint $c(a)$, since otherwise also $c$ could be extended by 
Lemma \ref{lem:localglobalgeodesic}.
Let $o=c(0)$ and $x=c(a)$.
The nonextendability of $c_{|0,a]}$ implies that for every point
$y \in X\setminus \{x\}$, the point $x$ is not contained
in $c_{oy}([0,1])$.
Thus $\rho$ defines also a retraction
$\rho_{|X\setminus\{x\}}:X\setminus\{x\} \times [0,1] \to X\setminus\{x\}$,
and hence $X$ as well as $X\setminus\{x\}$ are contractible.
This is in contradiction to the fact that
$H_n(X,\{x\},\Z)=\Z$, since $X$ is a manifold.

\end{proof}

We now prove Theorem \ref{thm:sdc}.
We start with a simple observation.
\begin{lemma} \label{lem-aux}
If $\al,\beta, a,b$ are real numbers with
$\al + \beta, a,  b \ge 0$, then
\begin{equation} \label{eq:generalformula}
\al b +\beta  a \ge (\al+ \beta)\min\{ a, b\}-
| a - b| \min\{|\al|,| \beta|\}.
\end{equation}
\end{lemma}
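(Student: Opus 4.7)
The inequality is symmetric in the roles of $(\al,a)$ and $(\beta,b)$, so I would assume without loss of generality that $a \le b$. Then $\min\{a,b\} = a$ and $|a-b| = b-a$, and the desired inequality rearranges to
\[
\al b + \beta a - (\al+\beta)a + (b-a)\min\{|\al|,|\beta|\} \;\ge\; 0,
\]
which simplifies to
\[
(b-a)\bigl(\al + \min\{|\al|,|\beta|\}\bigr) \;\ge\; 0.
\]
Since $b-a \ge 0$, it suffices to verify that $\al + \min\{|\al|,|\beta|\} \ge 0$.

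The plan is then a short case split on the sign of $\al$. If $\al \ge 0$, the inequality is trivial. If $\al < 0$, I use the hypothesis $\al + \beta \ge 0$ to conclude $\beta \ge -\al > 0$, so $|\beta| = \beta \ge -\al = |\al|$; hence $\min\{|\al|,|\beta|\} = |\al| = -\al$, and $\al + \min\{|\al|,|\beta|\} = 0$.

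There is essentially no obstacle here: the only mildly delicate point is noticing that the condition $\al+\beta \ge 0$ is exactly what forces $|\beta| \ge |\al|$ in the case $\al < 0$, which is what makes the $\min$ term on the right-hand side absorb the potentially negative contribution $\al b$ on the left.
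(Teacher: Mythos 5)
Your proof is correct and follows essentially the same route as the paper: reduce via the symmetry to the case $a\le b$, rewrite $\al b+\beta a=(\al+\beta)a+(b-a)\al$ so the claim becomes $(b-a)\bigl(\al+\min\{|\al|,|\beta|\}\bigr)\ge 0$, and split on the sign of $\al$, using $\al+\beta\ge 0$ to force $|\beta|\ge|\al|$ when $\al<0$. No discrepancies to report.
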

\begin{proof}
We assume without loss of generality, that
$ a \le  b$.
Then $ b =  a + c$, where $ c =|a-b| \ge 0$. We compute
$\al b +\beta  a = (\al +\beta)a + c \al.$
If $\al \ge 0$, then our estimate holds trivially.
If $\al < 0$, then the assumption implies that
$|\al| =\min\{|\al|,| \beta|\}$ and 
$c\al = -|a-b|\min\{|\al|,| \beta|\}$.
\end{proof}

{\bf Proof of Theorem \ref{thm:sdc}:}
Note that in order to prove Theorem \ref{thm:sdc} it suffices to prove the \\
{\bf Claim:} {\it Let $X$ be a 
proper, geodesic, Ptolemy metric space. Let further $p,x,y\in X$ be points
with 
\begin{equation} \label{eqn-excess}
\Big| |px| \, - \, |py|\Big| \; < \; |xy| .
\end{equation}
Then there exists a midpoint $m$ of $x$ and $y$ such that $|pm|<\frac{1}{2}[|px|+|py|]$.} \\

Indeed, since a geodesic Ptolemy metric space is distance convex by Lemma \ref{lem-distance-convex}, 
the claim implies the uniqueness of midpoints and therefore the validity of Theorem \ref{thm:sdc}. \\

In order to prove the claim, we set 
\begin{displaymath}
\eta \; := \; \frac{||px|-|py||}{|xy|} .
\end{displaymath}
Thus, $0\le \eta <1$ by assumption (\ref{eqn-excess}). 
Let $s:=\frac{1}{2}[|px|+|py|]$ and let $\tau :=\frac{|px|}{s}$
as well as $\rho := \frac{|py|}{s}$. Thus, $0<\tau ,\rho <2$ and $\tau +\rho =2$. \\

We first make the additional assumption that $\tau$ and $\rho$ are very close together, i.e., 
both are very close to $1$.
Namely we assume that 
\begin{equation} \label{eqn-assumption-tau-rho}
\min \Big\{ \frac{1}{\tau},\frac{1}{\rho}\Big\} \; - \; \eta \, \frac{\min \{ \tau ,\rho \}}{\tau \, \rho} \; \ge \;
\frac{1}{2} (1-\eta) .
\end{equation}
Clearly this assumption holds for $\rho = 1 = \tau$ (and hence $\eta=0$), 
and at first reading one might as well consider this special case.
We will now prove the claim under the Assumption (\ref{eqn-assumption-tau-rho}). 
Later we will reduce the general case to 
this special situation.\\

Choose geodesics $\gamma_{px},\gamma_{py}:[0,s]\longrightarrow X$ connecting $p$ to $x$ and $y$, parameterized
proportionally to arclength. For every $t\in[0,s]$ let $\rho_{s-t}:[0,2]\longrightarrow X$ be a geodesic connecting 
$\gamma_{px}(s-t)$ to $\gamma_{py}(s-t)$, parameterized proportionally to arclength. Since $X$ is proper, there exists 
a sequence $t_n\in (0,s)$, $t_n\longrightarrow 0$, such that $\sigma_{s-t_n}(1)\longrightarrow m$, where $m$ is some
midpoint of $x$ and $y$. \\
The distance convexity of geodesic, Ptolemy metric spaces implies $|pm|\le \frac{1}{2}[|px|+|py|]$.
To prove our claim, we assume equality and will finally reach a contradiction. \\
Thus we assume $|pm|=\frac{1}{2}[|px|+|py|]=s$.

\begin{figure}[htbp]
\centering
\psfrag{p}{$p$}
\psfrag{x}{$x$} 
\psfrag{y}{$y$} 
\psfrag{xn}{$x_n$}
\psfrag{yn}{$y_n$}
\psfrag{l/2}{${\scriptstyle l/2}$}
\psfrag{ln/2}{${\scriptstyle l_n/2}$}
\psfrag{phin}{$\phi_n$}
\psfrag{m}{$m$}
\psfrag{mn}{$m_n$}
\psfrag{ttn}{$\tau t_n$}
\psfrag{rtn}{$\rho t_n$}
\psfrag{an-}{${\scriptscriptstyle a_n^-}$}
\psfrag{an+}{${\scriptscriptstyle a_n^+}$}
\psfrag{bn-}{${\scriptscriptstyle b_n^-}$}
\psfrag{bn+}{${\scriptscriptstyle b_n^+}$}
\psfrag{gpx}{$\tau s_n$}
\psfrag{gpy}{$\rho s_n$}
\includegraphics[width=0.9\columnwidth]{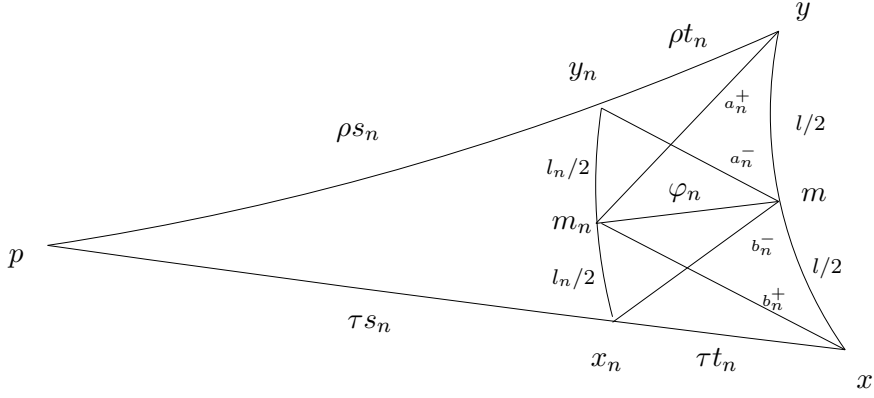}
\caption{The figure visualizes the notation in the proof of Theorem \ref{thm:sdc}.}
\end{figure}

We set
\begin{displaymath}
\begin{array}{lll}
s_n \; := \; s-t_n , & x_n \; := \; \gamma_{px}(s_n), & y_n \; := \; \gamma_{py}(s_n),  \\
m_n \; := \; \sigma_{s_n}(1), & l_n \; := \; |x_ny_n|, & l \; := \; |xy|, \\
b_n^- \; := \; |x_nm|, & a_n^- \; := \; |y_nm|, & b_n^+ \; := \; |xm_n|, \\
a_n^+ \; := \; |ym_n| & \mbox{and} & \varphi_n \; := \; |m_nm|.
\end{array}
\end{displaymath}
Since $m_n\to m$ we have
$\varphi_n \stackrel{n\to \infty}{\longrightarrow}0$. \\
By simple triangle inequality, the Ptolemy inequality for $\{ p,x_n,m,y_n\}$ and the sum of the Ptolemy
inequalities for $\{ x_n,x,m,m_n\}$ and $\{ m_n,m,y,y_n\}$, we obtain the inequalities
\begin{displaymath}
\begin{array}{rrcl}
(I) & a_n^+ \; + \; b_n^+ & \ge & l , \\ 
& & & \\
(II) & a_n^- \tau s_n \; + \; b_n^- \rho s_n & \ge & l_n (s_n+t_n) \;\;\; \mbox{as well as} \\
& & & \\
(III) & a_n^+ a_n^- \: + \; b_n^+ b_n^- & \le & \frac{ll_n}{2} \: + \; 2 t_n\varphi_n .
\end{array}
\end{displaymath}
Now the distance convexity \ref{lem-distance-convex} yields
\begin{displaymath}
b_n^- \; \le \; \frac{1}{2}(\tau\,t_n\;+\;|x_n\,y|)\;\le\;
 \frac{1}{2}(\tau\,t_n\;+\;\rho\,t_n\;+\;l_n)\;=\;
\frac{1}{2}l_n + t_n
\end{displaymath}
and similiarly $a_n^- \le \frac{1}{2}l_n +t_n$. These inequalities together with the triangle inequality 
$a_n^- +b_n^- \ge l_n$ also yield $b_n^-\ge \frac{1}{2}l_n-t_n$ as well as $a_n^-\ge \frac{1}{2}l_n-t_n$.
Hence we obtain $|a_n^- -\frac{l_n}{2}|\le t_n$ and $|b_n^- -\frac{l_n}{2}|\le t_n$. \\
In the same way, we deduce $|a_n^+ -\frac{l}{2}|\le t_n$ and $|b_n^+ -\frac{l}{2}|\le t_n$. \\
Thus, passing to appropiate subsequences, we find $A^-,B^-,A^+,B^+ \in (-2,2)$, such that 
\begin{displaymath}
\begin{array}{rclcrcl}
a_n^- & = & \frac{l_n}{2} \, + \, \frac{A^-}{\tau} t_n \, + \, o(t_n), & \hbox{ }&
b_n^- & = & \frac{l_n}{2} \, + \, \frac{B^-}{\rho} t_n \, + \, o(t_n), \\
& & & & & & \\
a_n^+ & = & \frac{l}{2} \, + \, \frac{A^+}{\tau} t_n \, + \, o(t_n), & \hbox{ } &
b_n^+ & = & \frac{l}{2} \, + \, \frac{B^+}{\rho} t_n \, + \, o(t_n),
\end{array}
\end{displaymath}
and 
\begin{equation} \label{eqn-frac-betraege}
|\frac{A^-}{\tau}|, \; |\frac{B^-}{\rho}|, \; |\frac{A^+}{\tau}|, \; |\frac{B^+}{\rho}| \; \le \; 1.
\end{equation}
The inequalities $(I)$, $(II)$ and $(III)$ above now yield

\begin{displaymath}
\begin{array}{rrcl}
(I') & \frac{A^+}{\tau} \, + \, \frac{B^+}{\rho} & \ge & 0 , \\ 
& & & \\
(II') & A^- \; + B^- & \ge & \frac{l}{s} \;\;\; \mbox{as well as} \\
& & & \\
(III') & \frac{A^-}{\tau} \, + \, \frac{A^+}{\tau} \, + \, \frac{B^-}{\rho} \, + \, \frac{B^+}{\rho} & \le & 0.
\end{array}
\end{displaymath}
But Lemma \ref{lem-aux}, the inequalities (\ref{eqn-frac-betraege}), inequality $(II')$, $|\tau -\rho |=\frac{l}{s}\eta$ 
and inequality (\ref{eqn-assumption-tau-rho}) also yield
\begin{eqnarray*}
\frac{A^-}{\tau} \, + \, \frac{B^-}{\rho} & \ge &
(A^- +B^-) \min \Big\{ \frac{1}{\tau} ,\frac{1}{\rho}\Big\} \; - \; 
\Big| \frac{1}{\rho} - \frac{1}{\tau}\Big| \, \min \Big\{ |A^-|,|B^-|\Big\} \\
& \ge & \frac{l}{s} \min \Big\{ \frac{1}{\tau} ,\frac{1}{\rho}\Big\} \; - \; 
\frac{|\tau -\rho|}{\rho \tau} \,  \min \Big\{ \tau ,\rho \Big\} \\
& \ge & \frac{l}{s} \frac{1}{2} (1-\eta ) \; > \; 0, 
\end{eqnarray*}
which contradicts to the simultanious validity of the inequalies $(I')$, and $(III')$. \\

Finally we have to get rid of the assumption
that $\rho$ and $\tau$ are close together.
Consider some arbitrary
$p,y_0,y_2$ with
$\eta:= \frac{|\,|py_0|\,-\,|py_2|\,|}{|y_0\,y_2|}< 1$.
Consider a geodesic $[0,2] \to X$,
$t \mapsto y_t$ from $y_0$ to $y_2$ parameterized
proportionally to arclength.
If $|p\,y_1| < s_0=\frac{1}{2}(\,|py_0|+|py_2|\,)$, we are done.
Thus we assume
$|p\,y_1|=s_0$ which then implies by Lemma \ref{lem-distance-convex}
that $t\mapsto |p\,y_t|$ is affine.
As a consequence we see that 
$$\frac{|\,|p\,y_{1-t}|-|p\,y_{1+t}|\,|}{|y_{1-t}\,y_{1+t}|} = \eta$$
is constant.
Thus if we take as a new triple the points
$p,y_{1-t},y_{1+t}$ for some $t> 0$,
then the corresponding $\eta$ is the same
as for the original triple.
Now we can choose $t>0$ sufficiently small,
such that
for
$y_0':=y_{1-t},y_2':=y_{1+t}$
and
$\rho' s_0=|py_0'|$,
$\tau' s_0=|py_2'|$
the estimate
(\ref{eqn-assumption-tau-rho})
holds
(this is possible, since $\eta$ is fixed and
$\rho',\tau'$ tend to 1 for $t\to 0$).
Thus by the above argument,
there exists
a midpoint
$y_1'$ of $y_0'$ and $y_2'$ with
$|p\,y_1'|< s_0$.
Clearly
$y_1'$ is also a midpoint of
$y_0$ and $y_2$ and we are done.
\hfill $\Box$



\section{Normed Vector Spaces} \label{sec:nvs}

We recall some facts about normed vector spaces
which we will use in the sequel.

Let $(V,\|\cdot \|)$ be a normed vector space.
A line in $V$ is a map
$c:\R\to V$ of the form
$c(t)=a+tb$, where $b \neq 0$. A line is always a geodesic,
however if the normball is not strictly convex, there are
other geodesics.
We call a subset $C \subset V$ {\em line-convex}, if for
$x,y \in C$ also the segment $[0,1]\to V$,
$t\mapsto ty+(1-t)x$ is contained in $C$.
We use the notion line-convex since it differs from the notion
of convexity as defined in Section \ref{sec:convexity} for normed spaces
with non strictly (line)-convex normball.

Let $\ga \in \ISO (V)$, then a line $c:\R\to V$ parameterized by arclength 
is called an axis of $\ga$, if $\ga c(t) = c(t+L)$ for some
$L > 0$. If $\ga$ has an axis, then we
denote with $\Ax (\ga)\subset V$ the set of points, which lie on
axes of $\ga$.
Then $\Ax(\ga)$ is a closed, line-convex subset of $V$.
If $\al \in \ISO (X)$ commutes with $\ga$,
then
$\Ax(\ga)$ is $\al$-invariant.

Essential for our argument is the following result of
Schoenberg \cite{Sch}

\begin{theorem} \label{thm:schoenberg}
A normed vector space $(V,\|\cdot \|)$ is a Ptolemy space,
if and only if it is an inner product space.
\end{theorem}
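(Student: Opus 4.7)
The theorem has two directions. The easy direction (inner product implies Ptolemy) is standard: any inner product space embeds isometrically into a Hilbert space, and a Hilbert space is $\CAT(0)$, so Ptolemy follows from the fact (cited in Subsection 1.2) that $\CAT(0)$-spaces are Ptolemy. A direct Cauchy--Schwarz computation on the Gram matrix of four vectors works equally well.

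For the nontrivial direction, suppose $V$ is a Ptolemy normed space. By the Jordan--von Neumann theorem it suffices to establish the parallelogram identity $\|x+y\|^2 + \|x-y\|^2 = 2\|x\|^2 + 2\|y\|^2$ for all $x,y\in V$. The natural starting point is to apply the Ptolemy inequality to the four vertices $\{0,x,x+y,y\}$ of a parallelogram in $V$: the two diagonals have lengths $\|x+y\|$ and $\|x-y\|$, while the two pairs of opposite sides have lengths $\|x\|,\|x\|$ and $\|y\|,\|y\|$, so one of the three Ptolemy inequalities reads
\[ \|x+y\|\cdot\|x-y\| \;\le\; \|x\|^2 + \|y\|^2. \quad (\star) \]
Substituting $(x,y)\mapsto((x+y)/2,(x-y)/2)$ in $(\star)$ yields the dual
\[ 4\|x\|\|y\| \;\le\; \|x+y\|^2 + \|x-y\|^2, \quad (\star\star) \]
while applying the three Ptolemy inequalities to the centrally symmetric quadruple $\{\pm x,\pm y\}$ gives the Cauchy--Schwarz-like bound $\bigl|\,\|x+y\|^2 - \|x-y\|^2\,\bigr| \le 4\|x\|\|y\|$.

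The remaining task is to upgrade these one-sided inequalities to an actual equality. Since the parallelogram identity is two-dimensional in nature, I would reduce to an arbitrary $2$-plane in $V$, which is itself a Ptolemy normed space, and argue that its unit ball is an ellipse. The plan is to apply the parallelogram Ptolemy inequality to the one-parameter family $\{0,x,x+ty,ty\}$ for $t\in\R$, obtaining $\|x+ty\|\cdot\|x-ty\| \le \|x\|^2 + t^2\|y\|^2$ for every $t$, and to analyze the resulting second-order constraint on $f(t):=\|x+ty\|^2$ as $t\to 0$. Combined with analogous inequalities from other test configurations, this forces $f$ to be a polynomial of degree at most two, which is precisely the parallelogram identity on the plane spanned by $x$ and $y$; Jordan--von Neumann then completes the proof. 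The main obstacle is exactly this upgrade: $(\star)$ and $(\star\star)$ are genuinely weaker than the parallelogram identity (in an inner product space $(\star)$ is already strict unless the vectors are orthogonal), and extracting sharpness requires a careful simultaneous exploitation of many Ptolemy inequalities on a well-chosen family of four-point configurations. This is the technical core of Schoenberg's original argument \cite{Sch}.
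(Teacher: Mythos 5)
First, a point of comparison: the paper does not prove this statement at all --- Theorem \ref{thm:schoenberg} is quoted verbatim from Schoenberg \cite{Sch} and used as a black box. So the only question is whether your argument is complete on its own, and it is not. Your easy direction is fine, and your inequalities $(\star)$ and $(\star\star)$ are correctly derived (the substitution $(x,y)\mapsto((x+y)/2,(x-y)/2)$ does turn $\|x+y\|\,\|x-y\|\le\|x\|^2+\|y\|^2$ into $4\|x\|\,\|y\|\le\|x+y\|^2+\|x-y\|^2$). But the hard direction then stops: you say explicitly that ``the remaining task is to upgrade these one-sided inequalities to an actual equality'' and offer only an unexecuted plan. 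That plan, as stated, does not work: from a one-parameter family of \emph{one-sided} inequalities $\|x+ty\|\,\|x-ty\|\le\|x\|^2+t^2\|y\|^2$ you cannot force $f(t)=\|x+ty\|^2$ to be a polynomial of degree two --- an upper bound on $f(t)f(-t)$ constrains nothing from below, and no amount of ``analogous inequalities from other test configurations'' is specified. This is a genuine gap, and it sits exactly where you say the technical core is.

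The irony is that you have already written down the key inequality and only lack the right tool to finish. Restricted to unit vectors $\|x\|=\|y\|=1$, your $(\star\star)$ reads $\|x+y\|^2+\|x-y\|^2\ge 4$, which is precisely the \emph{one-sided Day condition}. A theorem of M.~M.~Day, in the sharpened one-sided form which is the other half of Schoenberg's paper \cite{Sch} (its title is ``A remark on M.~M.~Day's characterization of inner-product spaces \emph{and} a conjecture of L.~M.~Blumenthal''), states that a normed space in which $\|x+y\|^2+\|x-y\|^2\ge 4$ holds for all unit vectors $x,y$ is already an inner product space; no matching upper bound and no full parallelogram identity need to be verified by hand. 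With that result quoted, your $(\star\star)$ finishes the nontrivial direction in one line, and the detour through ellipses, second-order Taylor constraints, and the centrally symmetric quadruple $\{\pm x,\pm y\}$ can be deleted. Without it (or an actual proof of the upgrade you postpone), the argument is incomplete.
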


Using asymptotic cones, it is not difficult to prove

\begin{lemma}
If a normed vector space is rough isometric to
a Ptolemy space, then it is a Ptolemy space.
\end{lemma}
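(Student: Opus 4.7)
The plan is to use asymptotic cones, exactly as the hint suggests. Let $(V,\|\cdot\|)$ be a normed vector space and assume there is a rough isometry $\phi:V\to X$ to a Ptolemy metric space $X$. Fix a non-principal ultrafilter $\omega$, a basepoint $o\in V$, and a sequence of scales $\la_n\to\infty$, and form the asymptotic cones $V_\omega:=\lim_\omega(V,o,\la_n)$ and $X_\omega:=\lim_\omega(X,\phi(o),\la_n)$.

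First I would observe that because the norm metric on $V$ is homogeneous of degree one (i.e.\ $(1/\la_n)\|\la_n(x-y)\|=\|x-y\|$), the map sending a class $[(x_n)]\in V_\omega$ to $\lim_\omega (x_n-o)/\la_n\in V$ is a well-defined isometric bijection. Thus $V_\omega$ is canonically isometric to $(V,\|\cdot\|)$ itself. Second, a rough isometry $\phi$ with constant $A$ induces a well-defined isometry $\phi_\omega:V_\omega\to X_\omega$ by $[(x_n)]\mapsto[(\phi(x_n))]$: the additive error $A$ is crushed by the rescaling $1/\la_n$, and the $A$-density condition ensures surjectivity. Therefore $V\cong V_\omega\cong X_\omega$ as metric spaces.

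Third, the Ptolemy inequality $|xy|\,|uv|\le|xu|\,|yv|+|xv|\,|yu|$ is a closed condition in the six pairwise distances of a quadruple, so it passes to ultralimits of four-tuples. Hence $X_\omega$ inherits the Ptolemy inequality from $X$. Combining the three steps yields that $V$ is isometric to a Ptolemy space, and is therefore itself Ptolemy.

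The only points requiring slight care are the verifications that $V_\omega$ really is $V$ (one needs to check that every ultralimit class has a representative, which follows because $\{(x_n)\}$ with $\|x_n-o\|/\la_n$ bounded admits a bounded representative after subtracting $o$ and dividing by $\la_n$, giving an element of $V$ via the completeness of $V$ in finite dimensions, and more generally via the fact that $V$ with its own metric is already a complete metric space on which the rescaled limit operation is trivial) and that $\phi_\omega$ is well-defined and surjective; both are standard, so the argument is not expected to pose real difficulty.
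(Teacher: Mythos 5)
Your overall strategy is the one the paper intends (the paper offers no proof beyond the hint ``using asymptotic cones''), and the core of your argument is sound: the rough isometry induces an isometry of asymptotic cones because the additive error $A$ is crushed by the rescaling $1/\la_n$, and the Ptolemy inequality, being a non-strict inequality in the six distances of a quadruple, passes to ultralimits, so $X_\omega$ is Ptolemy. The one genuinely problematic step is the first one: the claim that $V_\omega$ is \emph{canonically isometric to $V$ itself}, and in particular your justification of it for general $V$ ``via the completeness of $V$''. Completeness does not make bounded sequences $\omega$-convergent: in an infinite-dimensional normed space a bounded sequence such as the standard unit vectors of $\ell^2$ (pairwise distances $\sqrt{2}$) has no norm-limit along any non-principal ultrafilter, and the asymptotic cone of such a space is an ultrapower that is typically strictly larger than $V$. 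As written, your identification is valid only for finite-dimensional $V$ (where one uses compactness of closed balls, not completeness), whereas the lemma is stated for arbitrary normed vector spaces.

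The repair is immediate and actually simplifies the proof: you never need $V_\omega\cong V$, only the isometric \emph{embedding} $\iota:V\to V_\omega$, $v\mapsto[(o+\la_n v)]$, which is well defined and distance-preserving by homogeneity of the norm. Composing with $\phi_\omega$ exhibits $V$ as an isometric copy of a subset of the Ptolemy space $X_\omega$, and the Ptolemy condition, being a four-point condition, is inherited by arbitrary subsets; surjectivity of $\phi_\omega$ is likewise not needed. Alternatively, one can bypass cones entirely: for $x,y,u,v\in V$ apply the Ptolemy inequality in $X$ to $\phi(\la x),\phi(\la y),\phi(\la u),\phi(\la v)$, observe that each product of distances equals $\la^2$ times the corresponding product in $V$ up to an error $O(\la)$, divide by $\la^2$, and let $\la\to\infty$.
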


Therefore we obtain

\begin{corollary} \label{cor:corschoenberg}
If a normed vector space $(V,\|\cdot \|)$ is rough isometric to a
Ptolemy space, then it is an inner product space.
\end{corollary}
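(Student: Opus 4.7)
The statement is an immediate consequence of the two results that precede it, so my plan is simply to chain them. First, I would invoke the Lemma stated just before the corollary: since $(V,\|\cdot\|)$ is assumed to be rough isometric to some Ptolemy space, that Lemma tells us that $(V,\|\cdot\|)$ is itself a Ptolemy metric space. No extra work is needed at this step beyond quoting the Lemma.

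Second, I would apply Schoenberg's Theorem \ref{thm:schoenberg}, which asserts that a normed vector space is Ptolemy if and only if it is an inner product space. Combining these two implications gives that $(V,\|\cdot\|)$ is an inner product space, which is precisely the conclusion of the corollary.

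The only conceivable obstacle lies outside the statement of the corollary itself, namely in verifying the preceding Lemma (which is asserted without proof here); the Ptolemy inequality scales linearly under dilations and is preserved under ultralimits, so the asymptotic cone of the target Ptolemy space is again Ptolemy, while the asymptotic cone of a normed space is isometric to the space itself, and a rough isometry becomes an honest isometry on asymptotic cones. But for the corollary as stated, no further argument is required beyond the two-line concatenation above.
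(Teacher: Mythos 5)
Your proposal is correct and follows exactly the paper's route: the corollary is obtained by chaining the preceding Lemma (a normed space rough isometric to a Ptolemy space is itself Ptolemy) with Schoenberg's Theorem \ref{thm:schoenberg}. Your sketch of the Lemma via asymptotic cones also matches the paper's indication that it is proved ``using asymptotic cones.''
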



\section{Busemann functions}

In this section $X$ allways denotes a geodesic Ptolemy space.
Let $c:[0,\infty) \to X$ be a geodesic ray parameterized by arclength.
As usual we define the {\em Busemann function}
$b_c(x) = \lim_{t\to\infty}(|xc(t)|-t)$.
Since $b_c$ is the limit of the convex functions $d_{c(t)} -t$,
it is convex.

The following proposition implies that, in a Ptolemy space, rays 
with sublinear distance to each other define (up to
a constant) the same Busemann functions.

\begin{proposition} \label{prop:sublinearrays}
Let $X$ be a Ptolemy space, let $c_1,c_2:[0,\infty) \to X$ be rays with
Busemann functions $b_i:=b_{c_i}$, and assume
$\lim_{t\to \infty}\frac{1}{t}|c_1(t)c_2(t)|=0$.
Then 
$(b_1-b_2)$ is constant.
\end{proposition}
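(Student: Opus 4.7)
The plan is to apply the Ptolemy inequality to the quadruple $(x,c_2(t),c_1(t),y)$ and extract the leading $t$-behaviour: the $t^2$ contributions cancel, the residual linear-in-$t$ part is exactly $t[\phi(y)-\phi(x)]$ with $\phi := b_1-b_2$, and the remaining Ptolemy summand is negligible because of the sublinearity hypothesis.

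First I would recall that for each fixed $z\in X$, the function $t\mapsto |zc_i(t)|-t$ is non-increasing (triangle inequality, $c_i$ being parameterised by arclength) and bounded below by $-|zc_i(0)|$, so the Busemann limit $b_i(z)$ exists; accordingly one may write
\[
|zc_i(t)| \; = \; t \; + \; b_i(z) \; + \; \ep_i(z,t), \qquad \ep_i(z,t)\to 0 \text{ as } t\to\infty.
\]
Next, for fixed $x,y\in X$ I would apply the Ptolemy inequality to the quadruple $(x,c_2(t),c_1(t),y)$,
\[
|xc_2(t)|\cdot|yc_1(t)| \;\le\; |xc_1(t)|\cdot|yc_2(t)| \;+\; |xy|\cdot|c_1(t)c_2(t)|,
\]
and substitute the asymptotic expansions. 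Each of the two products on the left and the first on the right equals $t^2 + t\cdot(\text{sum of two Busemann values}) + o(t)$, so the quadratic terms cancel in the difference, yielding
\[
|xc_2(t)|\cdot|yc_1(t)| \;-\; |xc_1(t)|\cdot|yc_2(t)| \;=\; t\,[\phi(y)-\phi(x)] \;+\; o(t).
\]

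By the hypothesis $|c_1(t)c_2(t)|=o(t)$ and the fact that $|xy|$ is a fixed constant, the term $|xy|\cdot|c_1(t)c_2(t)|$ is also $o(t)$. Dividing the Ptolemy inequality by $t$ and letting $t\to\infty$ gives $\phi(y)-\phi(x)\le 0$; interchanging the roles of $x$ and $y$ (applying the same argument to the quadruple $(y,c_2(t),c_1(t),x)$) gives the reverse inequality, so $\phi(x)=\phi(y)$ for all $x,y\in X$, i.e.\ $b_1-b_2$ is constant.

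The main obstacle is identifying the correct Ptolemy configuration. Other orderings of the four points $\{x,y,c_1(t),c_2(t)\}$ either produce a $2t^2$ contribution on the right-hand side (swamping the linear-order information one wants to isolate) or fail to give the coefficient $\phi(y)-\phi(x)$. The ordering $(x,c_2(t),c_1(t),y)$, which pairs $x$ with $c_2$ and $y$ with $c_1$ on the left and swaps these pairings on the right, is (up to the obvious symmetries) the only one producing the clean cancellation needed. Once this configuration is noticed, the rest of the argument is a routine asymptotic expansion.
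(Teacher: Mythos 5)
Your proposal is correct and follows essentially the same route as the paper: both expand $|zc_i(t)|=t+b_i(z)+o(1)$, apply the Ptolemy inequality to the quadruple $\{x,y,c_1(t),c_2(t)\}$ with the pairing that cancels the $t^2$ terms, and use the sublinearity hypothesis to discard the $|xy|\,|c_1(t)c_2(t)|$ term. The only cosmetic difference is that the paper invokes the triangle inequality among all three Ptolemy products to get $|P_1Q_2-P_2Q_1|\le |pq|\,|c_1(t)c_2(t)|$ in one stroke, whereas you obtain the two one-sided bounds by swapping $x$ and $y$.
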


\begin{proof}
Let $p,q\in X$ and let
$P_i(t)=|pc_i(t)|$,
$Q_i(t)=|qc_i(t)|$.
We define $P_i'(t), Q_i'(t)$, such that
$P_i(t)=t+P_i'(t)$ and
$Q_i(t)=t+Q_i'(t)$.
Hence we have
$b_i(p)=\lim P_i'(t)$ and
$b_i(q)=\lim Q_i'(t)$.
Since $X$ is Ptolemy, the three numbers
$P_1(t)Q_2(t),P_2(t)Q_1(t),|c_1(t)c_2(t)|\,|pq|$
satisfy the triangle inequality.
The assumption 
$\lim_{t\to \infty}\frac{1}{t}|c_1(t)c_2(t)|=0$
now implies
$$\lim_{t\to \infty} \frac{1}{t}(P_1(t)Q_2(t)-P_2(t)Q_1(t)) = 0$$
and hence
$$\lim_{t\to \infty} (P_1'(t)+Q_2'(t)-P_2'(t)-Q_1'(t)) =0.$$
The last limit implies
$(b_1-b_2)(p)=(b_1-b_2)(q)$.

\end{proof}

Let now $c:\R \to X$ be a geodesic line parameterized by arclength.
Let $c^\pm:[0,\infty)\to X$ be the rays
$c^+(t)=c(t)$ and $c^-(t)=c(-t)$.
Let further $b^\pm:=b_{c^\pm}$.

The following follows easily.

\begin{lemma} \label{lem:pos}
$(b^++b^-)\ge 0$ and
$(b^++b^-)=0$ on the line $c$.

\end{lemma}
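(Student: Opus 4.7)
The plan is to exploit the fact that $c:\R\to X$ is a geodesic line parameterized by arclength, so $|c(t)\,c(-t)| = 2t$ for every $t\ge 0$.

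First I would prove the nonnegativity $(b^++b^-)\ge 0$ by a direct triangle-inequality argument. For an arbitrary $x\in X$ and $t>0$, the triangle inequality gives
\begin{equation*}
|x\,c(t)| \; + \; |x\,c(-t)| \;\ge\; |c(t)\,c(-t)| \;=\; 2t,
\end{equation*}
so $(|x\,c(t)|-t) + (|x\,c(-t)|-t) \ge 0$. Since $b^+(x)$ is the limit of $|x\,c(t)|-t$ and $b^-(x)$ is the limit of $|x\,c(-t)|-t$, passing to the limit $t\to\infty$ yields $b^+(x)+b^-(x)\ge 0$.

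Next I would verify the equality $(b^++b^-)=0$ on $c$ by direct computation. Fix $s\in\R$ and consider the point $c(s)$. For $t>|s|$ we have $|c(s)\,c(t)| = t-s$ and $|c(s)\,c(-t)| = t+s$ since $c$ is an arclength geodesic, so
\begin{equation*}
b^+(c(s)) \;=\; \lim_{t\to\infty}\bigl((t-s)-t\bigr) \;=\; -s, \qquad
b^-(c(s)) \;=\; \lim_{t\to\infty}\bigl((t+s)-t\bigr) \;=\; s,
\end{equation*}
and the two sum to zero.

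There is no real obstacle here: the Ptolemy hypothesis is not needed for this particular lemma, only the triangle inequality, the limit definition of Busemann functions (already known to exist since $d_{c(t)}-t$ is nonincreasing in $t$ for a ray), and the fact that $c$ restricts to an isometric embedding of $\R$. The only thing to be careful about is matching the sign conventions in the definitions of $c^+$ and $c^-$ with the parameter $s$ on the line.
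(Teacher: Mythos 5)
Your proof is correct and is exactly the elementary argument the authors have in mind: the paper offers no proof at all, merely stating that the lemma "follows easily," and the triangle inequality $|x\,c(t)|+|x\,c(-t)|\ge 2t$ together with the direct computation on the line is the intended route. The Ptolemy hypothesis is indeed not needed here, and your sign bookkeeping for $c^\pm$ matches the paper's conventions.
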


We now consider these Busemann functions for lines
with sublinear distance.

\begin{proposition} \label{prop:sublinearlines}
Let $c_1,c_2:\R\to X$ be lines with
$\lim_{t\to \infty} \frac{1}{t}|c_1(t)c_2(t)| =0$ 
and
$\lim_{t\to \infty} \frac{1}{t}|c_1(-t)c_2(-t)| = 0$.
Then
$(b_1^++b_1^-)=(b_2^++b_2^-)$.
\end{proposition}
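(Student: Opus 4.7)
The plan is to reduce the statement directly to Proposition \ref{prop:sublinearrays} applied separately to the forward rays and the backward rays, and then to pin down the resulting constant using Lemma \ref{lem:pos}.

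First I would apply Proposition \ref{prop:sublinearrays} to the rays $c_1^+$ and $c_2^+$, which by hypothesis satisfy $\lim_{t\to\infty}\frac{1}{t}|c_1^+(t)c_2^+(t)|=0$. This gives that $b_1^+-b_2^+$ is a constant function on $X$. The analogous argument applied to the backward rays $c_1^-$ and $c_2^-$ yields that $b_1^--b_2^-$ is also constant. Adding these two identities shows that
\[
K \; := \; (b_1^++b_1^-) \; - \; (b_2^++b_2^-)
\]
is a constant function on $X$.

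The remaining task is to prove $K=0$, and this is where Lemma \ref{lem:pos} comes in. Choose any point $p\in c_1(\R)$; then $(b_1^++b_1^-)(p)=0$, while $(b_2^++b_2^-)(p)\ge 0$ since $b_2^++b_2^-\ge 0$ everywhere. Evaluating $K$ at $p$ therefore gives $K\le 0$. Symmetrically, choosing a point $q\in c_2(\R)$ one has $(b_2^++b_2^-)(q)=0$ and $(b_1^++b_1^-)(q)\ge 0$, so evaluating $K$ at $q$ yields $K\ge 0$. Combining the two inequalities forces $K=0$, which is exactly the claim.

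There is essentially no obstacle in this argument: the substantive work has already been done in Proposition \ref{prop:sublinearrays} (the Ptolemy inequality combined with the sublinear distance hypothesis) and in Lemma \ref{lem:pos} (nonnegativity of $b^++b^-$ and its vanishing on the line). The only subtlety worth double-checking is that the hypothesis of Proposition \ref{prop:sublinearrays} applies verbatim to each pair of rays, which it does because $c_i^\pm$ are parameterized by arclength exactly as required.
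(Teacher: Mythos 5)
Your argument is correct and coincides with the paper's own proof: Proposition \ref{prop:sublinearrays} applied to the forward and backward rays gives that the difference is a constant, and Lemma \ref{lem:pos} evaluated at points of $c_1(\R)$ and $c_2(\R)$ forces that constant to be $0$. Nothing is missing.
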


\begin{proof}
By Proposition \ref{prop:sublinearrays} we have that
$(b_1^+-b_2^+)$ and $(b_1^--b_2^-)$ are constant which implies that
$(b_1^++b_1^--b_2^+-b_2^-)=a$ for some constant $a$.
We show that $a=0$.
If $p_1\in c_1(\R)$, then by Lemma \ref{lem:pos}
$(b_1^++b_1^-)(p_1) =0$ and $(b_2^++b_2^-)(p_1)\ge 0$.
Thus $a\le 0$.
If $p_2 \in c_2(\R)$, then
$(b_2^++b_2^-)(p_2) = 0$ and
$(b_1^++b_1^-)(p_2) \ge 0$. Thus $a\ge 0$.
\end{proof}


\section{Groups operating on $X$}

In this section $X$ always denotes a geodesic
Ptolemy metric space.
Let $\Ga \subset \ISO(X)$ be a subgroup 
of the isometry group operating properly and cocompactly
on $X$.
As already stated in the introduction, this implies that $X$ is
proper and in particular complete.
An element $\ga \in \Ga$ is a {\em torsion element}, if
$\ga^m = \id$ for some $m \ge 1$.
To an element
$\ga \in \Ga$ we associate the {\em displacement function}
$d_{\ga}(x):=|x\,\ga x|$.
A geodesic line is called an axis of an element
$\ga$, if there exists some $L>0$ such
that $\ga c(t) = c(t+L)$ for all $t \in \R$.
We always parametrize lines by arclength.
By $\Ax(\ga):=\{x\in X | x \ \mbox{lies on an axis of}\  \ga\}$
we denote the union of all axes of $\ga$. 

\begin{lemma}
(1) $d_{\ga}$ assumes the minimum $\min d_{\ga}\ge 0$.

(2) $\min d_{\ga} =0$ if and only if $\ga$ is a torsion element.

(3) If $\ga$ is not torsion,
then $\Ax(\ga)=\{x\in X | d_{\ga}(x) \ \mbox{is minimal}\ \}$.

(4) If $c_1,c_2$ are axis of $\ga$, then $|c_1(t)c_2(t)|$ is bounded.

(5) If $\al$ commutes with $\ga$ and $c$ is an axis of $\ga$, then also
$\al c$ is an axis of $\ga$.

\end{lemma}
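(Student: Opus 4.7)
We treat the five assertions in order, with (1) and (2) being compactness/uniqueness arguments, (3) carrying the real content, and (4)--(5) following formally.

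For (1), the plan is to take a sequence $x_n$ with $d_\ga(x_n)\to\inf d_\ga$, write $x_n=\ga_n y_n$ with $y_n$ in a fixed compact $K$ satisfying $\Ga K=X$, and observe that $d_\ga(x_n)=d_{\ga_n^{-1}\ga\ga_n}(y_n)$. Since each conjugate $\ga_n^{-1}\ga\ga_n$ moves a point of $K$ by a bounded amount, properness forces only finitely many such conjugates to occur. Passing to a subsequence on which $\ga_n^{-1}\ga\ga_n$ is a fixed element $\al$ and $y_n\to y_0\in K$, continuity of $d_\al$ yields $d_\al(y_0)=\inf d_\ga$, and then $\ga_{n_k}y_0$ realizes this infimum for $d_\ga$. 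For (2), if $\ga^m=\id$ the finite orbit $\{x,\ga x,\ldots,\ga^{m-1}x\}$ has by Corollary \ref{cor:center} a unique smallest enclosing ball $B_r(p)$; $\ga$-invariance of the orbit and uniqueness of the ball force $\ga p=p$ and hence $\min d_\ga=0$. Conversely, if $d_\ga(x_0)=0$ then $\ga$ lies in the stabilizer of $x_0$, which is finite by properness, so $\ga$ is torsion.

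The inclusion $\{x:d_\ga(x)=\min d_\ga\}\sub\Ax(\ga)$ in (3) is the technical heart. Given a minimizer $x$ with $L:=d_\ga(x)>0$, let $c^*:[0,L]\to X$ be the geodesic from $x$ to $\ga x$ and $m:=c^*(L/2)$. The triangle inequality $|m\,\ga m|\le|m\,\ga x|+|\ga x\,\ga m|=L$ together with $|m\,\ga m|=d_\ga(m)\ge L$ forces $|m\,\ga m|=L$, so $\ga x$ lies on the unique geodesic from $m$ to $\ga m$. Applying the Ptolemy inequality to $\{x,m,\ga x,\ga m\}$, with $|xm|=|m\,\ga x|=|\ga x\,\ga m|=L/2$ and $|x\,\ga x|=|m\,\ga m|=L$, then yields $|x\,\ga m|\ge\tfrac{3L}{2}$; matched by the triangle upper bound, this forces the four points $x,m,\ga x,\ga m$ to lie in order on a single geodesic. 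An induction using Ptolemy on $\{x,\ga^{n-1}x,\ga^n x,\ga^{n+1}x\}$, together with the $\ga^{n-1}$-translate of the base case $|x\,\ga^2 x|=2L$, then gives $|x\,\ga^n x|=nL$ for all $n\in\N$, and symmetrically for negative $n$. Hence the map $\bar c(t+nL):=\ga^n c^*(t)$, $t\in[0,L]$, $n\in\Z$, is a geodesic line with $\ga\bar c(t)=\bar c(t+L)$, i.e.\ an axis of $\ga$ through $x$. For the reverse inclusion, let $c$ be any axis with translation length $L_c$ and $c_y$ an axis through a minimizer $y$ (translation length $L$); the $\ga^n$-invariance of $D:=|c(0)\,c_y(0)|$ combined with the triangle inequality gives
\begin{displaymath}
nL_c=|c(0)\,\ga^n c(0)|\le D+|c_y(0)\,\ga^n c_y(0)|+D=nL+2D,
\end{displaymath}
so letting $n\to\infty$ forces $L_c=L$.

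Items (4) and (5) are essentially formal. For (4), (3) ensures both axes have translation length $L$, so $f(t):=|c_1(t)c_2(t)|$ satisfies $f(t+L)=|\ga c_1(t)\,\ga c_2(t)|=f(t)$; as a continuous $L$-periodic function on $\R$, it is bounded. For (5), if $\al\ga=\ga\al$ and $\ga c(t)=c(t+L)$, then $\ga(\al c)(t)=\al\ga c(t)=(\al c)(t+L)$, and $\al c$ is an arclength geodesic since $\al\in\ISO(X)$. The principal obstacle is the inductive Ptolemy argument in (3): propagating the single equality $|m\,\ga m|=L$ into the sharp identities $|x\,\ga^n x|=nL$ needed to assemble the axis. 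This is where the Ptolemy hypothesis on $X$ is used essentially, beyond mere distance convexity.
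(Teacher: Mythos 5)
Your proof is essentially correct, and parts (1), (2), (4), (5) and the reverse inclusion in (3) match the paper's argument (the paper delegates (1) to \cite{CS} and proves the reverse inclusion by exactly your triangle-inequality estimate $mL\le 2a+m\,d_{\ga}(x)$). The forward inclusion in (3) is where you genuinely diverge. The paper builds the piecewise geodesic $c$ with $c(nL)=\ga^n(x)$ and $c(\tfrac{2n+1}{2}L)=m(\ga^nx,\ga^{n+1}x)$, observes that $d_{\ga}$ evaluated at the midpoint is $\le L$ and hence $=L$ by minimality, concludes that each segment $c|_{[\frac{2n-1}{2}L,\frac{2n+1}{2}L]}$ is minimizing, and then invokes Lemma \ref{lem:localglobalgeodesic} (local geodesics are global, which needs only distance convexity) to see that $c$ is an axis. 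You instead propagate the equalities by applying the Ptolemy inequality directly, proving $|x\,\ga^nx|=nL$ by induction. Both routes work, but note two things. First, your closing claim that this step uses the Ptolemy hypothesis ``beyond mere distance convexity'' is not accurate as a statement about what is needed: the paper's proof of (3) uses only distance convexity together with uniqueness of midpoints, and your Ptolemy computations are a (valid) substitute for, not an improvement on, the local-to-global lemma. Second, your induction is slightly compressed at the base case: from the collinearity of $x,m,\ga x,\ga m$ you only get $|x\,\ga m|=\tfrac{3L}{2}$, and the identity $|x\,\ga^2x|=2L$ that seeds the induction on $\{x,\ga^{n-1}x,\ga^nx,\ga^{n+1}x\}$ requires one further step --- either two more Ptolemy applications (to $\{m,\ga x,\ga m,\ga^2x\}$ and then $\{x,m,\ga x,\ga^2x\}$) or an appeal to Lemma \ref{lem:localglobalgeodesic} on the overlapping translated segments. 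This is easily filled but should be said. Finally, in assembling the axis you should also remark (as you implicitly do) that $|x\,\ga^nx|=nL$ forces the whole concatenated path to be minimizing on each $[0,nL]$, not merely at the lattice points, which is what makes $\bar c$ a geodesic line.
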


\begin{proof}
(1) is standard. See e.g. the proof of Lemma 2.1. in \cite{CS}.

(2) If $\min d_{\ga}=0$, then $\ga$ has a fixed point. Since
$\Ga$ operates properly, $\ga$ is a torsion element.
If $\ga$ is torsion, then, for given $x \in X$, the orbit
$K=\{\ga^m (x)| m\in \Z\}$ is a $\ga$-invariant compact set.
By Corollary \ref{cor:center}, there exists a unique
$p\in X$, such that $K \subset B_r(p)$, where $r$ is minimal.
Then $p$ is fixed by $\ga$, hence $\min d_{\ga}=0$.

(3) Let $\ga$ be a nontorsion element.
Let $L =\min d_{\ga} >0$ be the minimum of the 
displacement function. Let $x\in X$ with $d_{\ga}(x) =L$.
Let $c:\R \to X$ be the piecewise geodesic with
$c(nL)=\ga^n(x)$.
Note that
$c(\frac{2n+1}{2}L)=m(\ga^n(x),\ga^{n+1}(x))$ 
and hence $\ga c((\frac{2n-1}{2})L)=c((\frac{2n+1}{2})L))$.
Thus
$$d_{\ga}(c((\frac{2n-1}{2})L))\le |c(\frac{2n-1}{2}L) c(nL)|+ |c(nL)c(\frac{2n+1}{2}L)|=L.$$
By minimality of $L$ we have equality which implies that
$c_{|[\frac{2n-1}{2}L,\frac{2n+1}{2}L]}$ is minimizing.
Thus since also $c_{|[nL,(n+1)L]}$ is minimizing, $c$ is locally minimizing
and hence a geodesic by Lemma \ref{lem:localglobalgeodesic}.
Thus $c$ is an axis of $\ga$.

Assume now that $c:\R \to X$ is an axis with $\ga c(t) =c(t+L)$, $L >0$.
We show that $L =\min d_{\ga}$.
Let therefore $x\in X$ and let $a=d(x,c(0))$.
By triangle inequality
$$mL=|c(0)\ga^m c(0)|\le |c(0)x|+|x\ga^m x|+|\ga^m x \, \ga^m c(0)|\le a+ m\, d_{\ga}(x) +a.$$
Hence $mL\le 2a+ m\, d_{\ga}(x)$ for all $m$, which implies $d_{\ga}(x) \ge L$. 

(4) Since $c_1,c_2$ are axes parameterized by arclength, we have $|c_1(t+L)c_2(t+L)|=|c_1(t)c_2(t)|$.

(5) If $\al$ commutes with $\ga$ and $\ga c(t) =c(t+L)$, then
$\ga (\al c(t))=\al (\ga c(t))=\al c(t+L)$,
hence also $\al\circ c$ is an axis of $\ga$.
\end{proof}

Let now $\ga$ be a nontorsion element of $\Ga$.
We define
$b_{\ga}=b_c$, where $c$ is an axis of $\ga$.
Since two axis have bounded distance,
$b_{\ga}$ is well defined up to a constant by Proposition \ref{prop:sublinearlines}.
More generally we define
$b^{\pm}_{\ga}=b^{\pm}_c$
and
$B_{\ga}=b_{\ga}^++b_{\ga}^-$. 
Note that 
$B_{\ga}$ is
well defined and does not depend on the choice of an axis by
Proposition \ref{prop:sublinearrays}.

\begin{lemma} \label{lem:businvariance}
(1) If $\al$ commutes with $\ga$, then
$b_{\ga}(\al(x))-b_{\ga}(x)$ is constant (in $x$).

(2) $b_{\ga}(\ga(x))-b_{\ga}(x) = \min d_{\ga}$.

(3) $B_{\ga}=0$ on all axes of $\ga$.

(4) $B_{\ga^{-1}}=B_{\ga}$ and 
$B_{\ga}(\al(x))=B_{\al^{-1}\ga\al}(x)$ for
all $\al \in \Ga$. 
\end{lemma}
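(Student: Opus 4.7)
All four items combine two inputs from the preceding sections. From the previous lemma, any two axes of $\ga$ are at bounded (hence sublinear) distance in both directions, and an isometry commuting with $\ga$ carries axes of $\ga$ to axes of $\ga$. From Section~4, the rigidity Propositions \ref{prop:sublinearrays} and \ref{prop:sublinearlines} pin the Busemann functions of axes down to an additive constant, and uniquely determine the sum $B_\ga$ of the two one-sided Busemann functions.

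For (1), fix an axis $c$ of $\ga$. The isometry identity $|\al x\,c(t)|=|x\,\al^{-1}c(t)|$ gives $b_c\circ\al=b_{\al^{-1}c}$. Since $\al^{-1}$ commutes with $\ga$, part (5) of the preceding lemma shows that $\al^{-1}c$ is again an axis of $\ga$, and by part (4) its distance from $c$ is bounded. Proposition \ref{prop:sublinearrays} then yields $b_{\al^{-1}c}-b_c=\operatorname{const}$, which is the claim. For (3), the same bounded-distance fact together with Proposition \ref{prop:sublinearlines} shows that $B_\ga$ depends only on $\ga$ and not on the defining axis; on each axis $c$, Lemma \ref{lem:pos} gives $b_c^++b_c^-=0$, so $B_\ga$ vanishes on all of $\Ax(\ga)$. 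For (4), the reverse parametrization $t\mapsto c(-t)$ of an axis $c$ of $\ga$ is an axis of $\ga^{-1}$ with the two ends swapped, hence $b_{\ga^{-1}}^\pm=b_\ga^\mp$ and $B_{\ga^{-1}}=B_\ga$. The identity $(\al^{-1}\ga\al)(\al^{-1}c(t))=\al^{-1}c(t+L)$ (when $\ga c(t)=c(t+L)$) shows that $\al^{-1}c$ is an axis of $\al^{-1}\ga\al$; summing $b_c^\pm\circ\al=b_{\al^{-1}c}^\pm$ over both signs gives $B_\ga\circ\al=B_{\al^{-1}\ga\al}$.

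For (2), pick an axis $c$ with $\ga c(t)=c(t+L)$ and $L=\min d_\ga$. Then $\ga^{-1}c(t)=c(t-L)$, so after the substitution $s=t-L$ one computes
\[
b_c(\ga x)\;=\;\lim_{t\to\infty}\bigl(|x\,c(t-L)|-t\bigr)\;=\;b_c(x)-L,
\]
giving the claimed translation identity $b_\ga\circ\ga-b_\ga=\pm\min d_\ga$ up to the orientation convention on the axis. No part of this lemma poses a genuine obstacle; the whole statement is a bookkeeping exercise packaging the Busemann-function rigidity of Section~4 together with the axis structure from the previous lemma.
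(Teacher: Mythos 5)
Your proof is correct and follows essentially the same route as the paper: part (1) via Proposition \ref{prop:sublinearrays} applied to the axis $\al^{-1}\circ c$, part (3) from Lemma \ref{lem:pos} together with the well-definedness of $B_{\ga}$, and parts (2) and (4) by direct computation on an axis. The sign ambiguity you flag in (2) is genuinely present in the paper's conventions but harmless (the lemma is only ever invoked through $|b_{\ga}(\ga x)-b_{\ga}(x)|=\min d_{\ga}$), and your direct computation $b_c(\ga x)=b_c(x)-L$ for arbitrary $x$ is, if anything, slightly cleaner than the paper's reduction to $x=c(0)$ via part (1).
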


\begin{proof}
(1) If $\al$ commutes with $\ga$ and $c:\R\to X$
is an axis of $\ga$, then also $\al^{-1}\circ c$ is an axis of $\ga$ and
hence in bounded distance to $c$. Thus
$b_{\al^{-1}\circ c}$ and $b_c$ differ by a constant by Proposition
\ref{prop:sublinearrays}.
Since $b_c(\al x)= b_{\al^{-1}\circ c}(x)$ we otain the result.

(2) Because of (1) we need to compute the equality only
for $x=c(0)$ where $c$ is an axis of $\ga$.
Since $\ga c(0)=c(L)$ with $L=\min d_{\ga}$ we are done.

(3) is merely a reformulation of  Lemma \ref{lem:pos}.

(4) follows from an easy computation.

\end{proof}

Now the proof of Theorem \ref{thm:straight} 
follows from the arguments in Section III of \cite{CS}.
The essential tool for the proof in that paper was Corollary 2.6 which corresponds
exactly to our Lemma \ref{lem:businvariance}. One can easily check
that the proof in the cited paper transfers word by word to the present situation.


\section{Affine functions}

Let $X$ be a geodesic metric space.
Recall that for $x,y \in X$ we denote by
$m(x,y)=\{z\in X \, | \, d(x,z)=d(z,y)=\frac{1}{2}d(x,y)\}$ the set of midpoints of $x$ and $y$.
A map $f:X \to Y$ between two geodesic metric spaces is called
{\em affine}, if
for all $x,y \in X$,
we have
$f(m(x,y)) \subset m(f(x),f(y))$.
Thus a map is affine if and only if it maps
geodesics parameterized proportionally to arclength into 
geodesics parameterized proportionally to arclength.
An affine map $f:X \to \R$ is called an affine function.
Note that this definition coincides with the definition given in Section \ref{sec:convexity}

Given a geodesic metric space $X$, we set
$$\cA'(X):=\{f:X\to \R|\ f\ \mbox{affine and Lipschitz}\}.$$
$\cA'(X)$ is a Banachspace, where $\| f\|$ is the optimal Lipschitz constant.
We set $\cA(X):=\cA'(X)/\sim$, where $f\sim g$ if $(f-g)$ is constant.
By $[f]$ we denote the equivalence class of $f$.
Then also $\cA(X)$ is a Banachspace, where $\|[f]\|=\|f\|$.
The group
$\ISO(X)$ acts on $\cA'(X)$ (and $\cA(X)$) by linear isometries via
$\ga(f)(x)=f(\ga^{-1}(x))$ (resp. $\ga([f])=[\ga(f)]$).
Let $\cA^*(X)$ be the Banach dual space of $\cA(X)$ with the norm
$$\|\rho\|=sup_{[f]\in\cA(X)}\frac{|\rho([f])|}{\|[f]\|}.$$
Also $\ISO(X)$ operates linearly on $\cA^*$ by isometries via
$\ga(\rho)([f])=\rho(\ga^{-1}([f]))$.
For $x,y \in X$ let $E(x,y)\in \cA^*(X)$ 
be the evaluation 
map $E(x,y)([f])=f(x)-f(y)$. 
It follows directly from the definitions
that
$\|E(x,y)\|\le |xy|$.
For $\ga \in \ISO(X)$ we have
$\|E(\ga(x),\ga(y))\|=\|E(x,y)\|$.

For a given basepoint $o\in X$
consider the subset
$\cE_o=\{E(x,o)|x\in X\} \subset \cA^*$,
and 
the map
$$A_o:X\to \cE_o,\ \ \ \ \ \ \ \ x\mapsto E(x,o).$$
If $x,y\in X$ and $z\in m(x,y)$ then for all $[f] \in \cA$ we have
$$E(z,o)[f]=f(z)-f(o)=\frac{1}{2}(f(x)+f(y))-f(o)
=(\frac{1}{2}E(x,o)+\frac{1}{2}E(y,o))[f].$$
Thus $A_o$ is affine and maps the geodesics in $X$ to lines in
$\cE_o \subset \cA^*$. In particular $\cE_o$ is a line-convex subset of $\cA^*$.
Since $\|E(x,o)-E(y,o)\|=\|E(x,y)\|\le |xy|$ we see that
$A_o$ is $1$-Lipschitz.

We consider the map
$a_o:\ISO(X) \to \ISO(\cE_o)$
given by
$a_o(\ga)( E(x,o)):= E(\ga x,o)$. Indeed $a_0(\ga )$ is an affine isometric
operation, since
$$\|E(\ga x,o)-E(\ga y,o)\|=\|E(\ga x,\ga y)\|=\|E(x,y)\|=\|E(x,o)-E(y,o)\|.$$
With respect to this affine operation the map $A_o$ is $\ISO(X)$-equivariant,
i.e. $A_o(\ga x)=a_o(\ga) A_o(x)$.
The isometries $a_o(\ga)$ map lines in $\cE_o$ to lines 
in $\cE_o$.
We remark that this affine action of $\ISO(X)$ on $\cE_o$ is {\em not}
the restriction of the linear action of $\ISO(X)$ on $\cA^*$ to the subset
$\cE_o$. 

We recollect all this information in the following 

\begin{lemma} \label{lem:aff}
If $X$ is a geodesic metric space, then the map
$A_o:X\to \cE_o$ is a surjective, affine, $1$-Lipschitz and $\ISO(X)$-equivariant
map of $X$ onto the line-convex subset $\cE_o\subset \cA^*$. 
\end{lemma}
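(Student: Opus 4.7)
The plan is to verify the four listed properties of $A_o$ one at a time, since most of them have essentially been established in the paragraphs preceding the lemma statement; the proof is really a bookkeeping exercise.

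First I would dispatch the three immediate properties. Surjectivity of $A_o: X \to \cE_o$ is tautological, as $\cE_o$ was defined to be the image of $A_o$. The $\ISO(X)$-equivariance $A_o(\ga x) = a_o(\ga) A_o(x)$ is exactly the definition of $a_o(\ga)$. The $1$-Lipschitz property follows from $\|E(x,y)\| \le |xy|$: any representative $f$ of a class $[f] \in \cA(X)$ is Lipschitz with constant $\|[f]\|$, so $|E(x,y)([f])| = |f(x)-f(y)| \le \|[f]\|\cdot |xy|$, and the supremum definition of the dual norm gives the claim.

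Next, for the affine property, given $x,y \in X$ and $z \in m(x,y)$, and any $[f] \in \cA(X)$, the defining midpoint property of an affine function gives $f(z) = \tfrac{1}{2}(f(x)+f(y))$. Subtracting $f(o)$ and evaluating at the class $[f]$ yields $E(z,o) = \tfrac{1}{2}\bigl(E(x,o)+E(y,o)\bigr)$ in $\cA^*$, i.e.\ $A_o(z) = \tfrac{1}{2}(A_o(x)+A_o(y))$, which is precisely the affineness of $A_o$.

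The one point that takes a small extra argument is the line-convexity of $\cE_o \subset \cA^*$. Given $x_0,x_1 \in X$ and $t \in [0,1]$, I need to produce a point in $X$ mapping to $(1-t)A_o(x_0)+tA_o(x_1)$. My approach is to pick a geodesic $\ga:[0,1]\to X$ from $x_0$ to $x_1$ and prove $A_o(\ga(t)) = (1-t)A_o(x_0)+tA_o(x_1)$ for all $t$. On dyadic rationals $t = k/2^n$ this follows by a straightforward induction on $n$ using the midpoint identity proved above. Since $A_o$ is $1$-Lipschitz and $\ga$ is continuous, both sides depend continuously on $t$, so the identity extends from the dense dyadic set to all of $[0,1]$. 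Nothing here is really difficult; the only mildly subtle step is this continuity/density argument that upgrades line-convexity from midpoints to arbitrary convex combinations.
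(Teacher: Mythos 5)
Your proof is correct and follows essentially the same route as the paper, which states Lemma \ref{lem:aff} only as a summary of the properties already verified in the preceding paragraphs (surjectivity and equivariance by definition, the $1$-Lipschitz bound from $\|E(x,y)\|\le|xy|$, and affineness from the midpoint computation $E(z,o)[f]=\tfrac12(E(x,o)+E(y,o))[f]$). The one place you go beyond the paper is the dyadic-density-plus-continuity argument upgrading the midpoint identity to full line-convexity of $\cE_o$; the paper simply asserts that $A_o$ maps geodesics to lines, and your extra step is a legitimate (and welcome) filling of that small gap, since in a general normed space a geodesic need not a priori be a straight segment.
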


\begin{definition}
We say that {\em affine functions separate points on $X$},
if for different $x,y \in X$ there exists $f\in \cA'(X)$ with
$f(x)\neq f(y)$.
\end{definition}

Note that affine functions separate points if and only
if the map $A_o$ is injective. In this case we have the following
strong result of Hitzelberger and Lytchak \cite{HL}.

\begin{theorem}
If affine functions separate points, then
$A_o:X\to \cE_o$ is an isometry.
\end{theorem}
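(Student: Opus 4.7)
My plan starts from Lemma \ref{lem:aff}, which already gives that $A_o$ is affine, $1$-Lipschitz, surjective onto $\cE_o$, and $\ISO(X)$-equivariant. The hypothesis that affine functions separate points immediately upgrades this to injectivity, since $E(x, o) = E(y, o)$ says $f(x) = f(y)$ for every $f \in \cA'(X)$, forcing $x = y$. So what remains is the reverse Lipschitz inequality $\|E(x,y)\| \ge |xy|$ for all $x, y \in X$.

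By the definition of the dual norm,
$$\|E(x,y)\| \;=\; \sup\bigl\{\,|f(x) - f(y)| \,:\, f \in \cA'(X),\;\|f\|_{\Lip} \le 1\,\bigr\},$$
so the goal reduces to producing, for each pair $x, y \in X$ and each $\ep > 0$, a $1$-Lipschitz affine function $f \colon X \to \R$ with $f(x) - f(y) \ge |xy| - \ep$. The natural candidate is the signed distance along a unit-speed geodesic $c \colon [0, |xy|] \to X$ from $y$ to $x$, namely $f_0(c(t)) := t$; this $f_0$ is tautologically $1$-Lipschitz and affine on the image of $c$, with $f_0(x) - f_0(y) = |xy|$ exactly. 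The whole theorem thus reduces to extending $f_0$ from $c([0,|xy|])$ to a $1$-Lipschitz affine function defined on all of $X$.

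This extension step is the crux and the main obstacle. A McShane-type $1$-Lipschitz extension is automatic but generally destroys affinity, whereas affinity is a global midpoint-preserving constraint linking arbitrarily distant pairs of points. My plan would be a Zorn's lemma argument on the poset of $1$-Lipschitz affine extensions of $f_0$ to midpoint-closed subsets of $X$, ordered by inclusion of graphs: chains admit upper bounds (both the $1$-Lipschitz condition and affinity pass to unions of closed graphs), so maximal extensions exist. The real technical heart is then a single-point extension lemma --- given such an $f$ on a midpoint-closed $Y$ and a point $p \notin Y$, exhibit a value $f(p) \in \R$ keeping the extension $1$-Lipschitz and affine after midpoint closure of $Y \cup \{p\}$ --- together with an argument using the separation hypothesis to rule out maximal extensions defined on proper subsets of $X$. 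Verifying the single-point extension will require a Hahn--Banach flavored analysis of the linear constraints that affinity imposes when midpoints chain together, and this is where I expect the real difficulty; in our Ptolemy setting, however, the unique midpoints of Corollary \ref{cor:uniquemidpoint} and the strict distance convexity of Theorem \ref{thm:sdc} should keep the midpoint-closure operation and the resulting constraint system manageable.
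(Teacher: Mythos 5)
This statement is not proved in the paper at all: it is quoted verbatim as the main theorem of Hitzelberger--Lytchak \cite{HL}, so the relevant comparison is with their argument. Your reduction to the reverse inequality $\|E(x,y)\|\ge |xy|$ and to the existence, for each geodesic $c$ from $y$ to $x$, of a $1$-Lipschitz affine function on all of $X$ restricting (up to $\ep$) to signed arclength on $c$, is a correct reformulation. But the entire content of the theorem is concentrated in the step you defer, and the deferral is a genuine gap. Your single-point extension lemma is essentially a Hahn--Banach theorem for a space with no linear structure: the linear structure is precisely what the theorem is supposed to produce, so invoking ``Hahn--Banach flavored analysis'' at this stage is circular in spirit. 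Concretely, once $f(p)$ is chosen, affinity propagates the value through the entire midpoint closure of $Y\cup\{p\}$; this creates infinitely many consistency equations (a point reachable as an iterated midpoint in two ways forces two a priori different values) together with infinitely many Lipschitz inequalities, all in the single unknown $f(p)$, and nothing you have said rules out that this system is inconsistent for every choice. The separation hypothesis cannot rescue this: it is purely qualitative (for each pair \emph{some} affine function distinguishes them) and gives no control of $|f(x)-f(y)|/\|f\|$, which is exactly the quantitative norming property you need. Strict distance convexity and unique midpoints (Theorem \ref{thm:sdc}, Corollary \ref{cor:uniquemidpoint}) simplify bookkeeping but do not touch this obstruction.

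For comparison, the proof in \cite{HL} runs in the opposite direction. One first uses separation only to get injectivity of $A_o$, so that $X$ is identified, as a set with its midpoint structure, with the line-convex set $\cE_o$ in the vector space $\cA^*$, geodesics going to segments. The key step is then to show that the original metric, transported to this convex set, is translation invariant (via parallelogram identities for midpoints) and hence induced by a norm on the affine span. Only after that linear structure is in place does one apply the genuine Hahn--Banach theorem in the resulting normed space to produce, for each pair $x,y$, a norm-one linear functional realizing $|xy|$; its restriction to $X$ is the $1$-Lipschitz affine function your plan needs, and the inequality $\|E(x,y)\|\ge |xy|$ follows. In other words, the norming functionals are a consequence of the theorem, not an available tool for proving it, and your plan as written assumes the hard half of the conclusion.
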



\section{Proof of Theorems \ref{thm:cryst}, \ref{thm:manifold} and Corollary \ref{cor:solv}}

In this section we assume that $X$ is a 
geodesic Ptolemy space and $\Ga\subset \ISO(X)$ is a crystallographic group
operating properly and cocompactly on $X$. Recall that 
under these assumptions $X$ is proper.

Recall the following characterization of crystallographic
groups, (see \cite{W}, Theorem 3.2.9).

A group $\Ga$ is crystallographic, if and only if $\Ga$ has a normal, free
abelian subgroup $\Ga^*$ of finite rank and finite index in $\Ga$ which is
maximal abelian in $\Ga$. In that case, $\Ga^*$ is unique.

Note that $\Ga^* \simeq \Z^n$, since $\Ga^*$ is free of finite rank.
Since $\Ga^*$ has finite index, $\Ga^*$ also operates cocompactly on
$X$.
Let $\al \in \Ga^*\setminus \id$ , let $c$ be an axis of
$\al$ and let $b^{\pm}_{\al}=b^{\pm}_c$ be the corresponding Busemann functions.
It follows from Lemma \ref{lem:businvariance} that
the function
$B_{\al}=b^+_{\al}+b^-_{\al}$ is $\Ga^*$ invariant.
Since $\Ga^*$ acts cocompactly the function
$B_{\al}$ is bounded.

\begin{proof}(of Theorem \ref{thm:manifold})
We now assume in addition that $X$ is a manifold.
Thus by Corollary \ref{cor:extgeod} all geodesics in $X$ can be extended.
Thus the bounded convex functions
$B_{\al}$ are constant for all $\al \in \Ga^*\setminus \id$.
Since $B_{\al}(x)=0$ for a point on an axis of $\al$, we
have that $B_{\al}=0$ on $X$.
Thus $B_{\al}=b^+_{\al}+b^-_{\al}$ is in particular affine,
and since the $b^{\pm}_{\al}$ are convex, the functions
$b^{\pm}_{\al}$ are actually affine.

Since $A_o$ maps geodesics in $X$ to lines in $\cE_o$ and since
all geodesics are extendable, the image $\cE_o$ is an affine subspace of
$\cA^*$.

We will use the following fact which follows immediately from the 
cocompactness
of $\Ga^*$: there exists a constant
$C_1>0$, such that for all $p,q \in X$ there exists $\al \in \Ga^*$ with
$|p \al(q)|\le C_1$.

We now show that the affine functions separate points.
Let therefore $x,y \in X$ with $x\ne y$.
We first assume that
$|xy|>6C_1$. Then there exists $\al \in \Ga^*\setminus \id$, such that
$|\al(x)y|\le C_1$. There also exists an axis
$c:\R\to X$ of $\alpha$ such that $|x c(0)|\le C_1$ (this follows since for any axis
$c'$ of $\al$ and any $\ga \in \Ga^*$, also $\ga \circ c'$ is an axis of
$\al$).
Consider now the affine function $b_{\al}$. Since $b_{\alpha}$ is $1$-Lipschitz,
we have
$|b_{\al}(x)-b_{\al}(c(0))|\le C_1$,
$|b_{\al}(\al c(0))-b_{\al}(\al x)|\le C_1$,
$|b_{\al}(\al x)-b_{\al}(y)| \le C_1$, and
$|b_{\al}(\al c(0)) b_{\al}(c(0))|= |c(0)\al(c(0))|$ by 
Lemma \ref{lem:businvariance} (2).
Thus
$$|b_{\al}(x)-b_{\al}(y)|\ge |c(0)\al(c(0))| - 3C_1 \ge |xy|-6C_1 >0,$$
which shows that $x,y$ are separated by $b_{\al}$.

Let now $x,y \in X$ be arbitrary with $x\ne y$.
Let $c:\R \to X$ be the unit speed geodesic with
$c(0)=x$ and $c(\tau)=y$ for $\tau=|xy|$.
Let $s>6C_1$ and $y'=c(s)$.
By the above argument there exists an affine function $f$,
with $f(x)\ne f(y')$. Thus the affine function
$f\circ c$ is not constant, and hence $f(x)\ne f(y)$.

Since affine functions separate points, the map
$A_o:X \to \cE_o$ is an isometry by the Hitzelberger Lytchak Theorem.
Thus $\cE_o$ is an affine subspace of the normed vector space $\cA^*$
which is in addition
a Ptolemy space. By Schoenbergs result it is an inner product space and hence
isometric to the
standard $\R^n$, where $n =\dim X$.
\end{proof}

We now turn to the proof of Theorem \ref{thm:cryst}.
We assume that a crystallographic group operates 
properly and cocompactly 
on the 
geodesic Ptolemy space $X$.

Let $X'\subset X$ be a nonempty, closed, convex and 
$\Ga$-invariant subset,
which is minimal with respect to these
properties. Since $\Ga$ operates 
cocompactly on $X$, it is easy to show, that such 
minimal set $X'$ exists. Namely, consider an arbitrary chain $\{ A_j\}_{j\in J}$  
of such sets. Then this chain is bounded by the nonempty, closed, convex and
$\Gamma$-invariant set $\bigcap A_j$. In order to see that this set is nonempty, 
just take a compact, fundamental region $K$ of $\Gamma$ and consider the intersections
$A_j\cap K$, $j\in J$. Since $K$ is a compact fundamental region, and the $A_j$ are $\Gamma$-invariant,
it follows that  $\bigcap A_j \neq \emptyset$. Now the existence of a minimal set $X'$
follows from Zorn's Lemma.
 
By Corollary \ref{cor:projection}
there exists a continuous projection
$\pi:X \to X'$. This projection is
$\Ga$-equivariant and since $\Ga$ operated cocompactly on
$X$ as well as on $X'$,
there exists a constant $C\ge 0$ such that
$|x\,\pi(x)|\le C$ for all $x\in X$ and hence
$\pi$ is a rough isometry.
  Thus we have to prove the theorem only for $X'$. Thus we can
suppose that $X$ satisfies the following additional

{\bf Assumption:} If $W \subset X$ is a nonempty closed convex and
$\Ga$-invariant
subset, then $W=X$.

\begin{lemma}
For all $x\in X$ and
for all $\al \in \Ga^*\setminus \id$ we have

(1) $B_{\al}(x) = 0$ and hence the functions $b^{\pm}_{\al}$ are affine.

(2) $A_o(x)\in\Ax(a_o(\al))$.

\end{lemma}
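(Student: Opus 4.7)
The overall strategy is a minimality-plus-averaging argument for (1), followed by transporting the resulting affine structure through $A_o$ to produce the required axes in $\cE_o$ for (2).

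For (1), I would exploit that $\Gamma^*$ is an abelian normal subgroup of finite index in $\Gamma$, so conjugation by $\Gamma$ on $\Gamma^*$ factors through the finite quotient $\Gamma/\Gamma^*$; consequently the $\Gamma$-conjugacy class $C_\al\subset\Gamma^*$ of $\al$ is finite. I would then form
\[
F_\al \;:=\; \sum_{\be\in C_\al} B_\be ,
\]
which is convex (sum of convex functions) and nonnegative (Lemma \ref{lem:pos}). Using Lemma \ref{lem:businvariance}(4) and the fact that $\Gamma$ permutes $C_\al$ by conjugation, one checks that $F_\al(\ga x)=F_\al(x)$ for every $\ga\in\Gamma$, i.e.\ $F_\al$ is $\Gamma$-invariant. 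Since $\Gamma$ acts cocompactly, $F_\al$ attains its infimum $m$, and the set $\{F_\al=m\}$ is a nonempty closed convex $\Gamma$-invariant subset of $X$; by the minimality assumption it equals $X$, so $F_\al\equiv m$.

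Next, since each $B_\be$ is convex and $\ge 0$ and their sum is the constant $m$, each summand is both convex and concave (writing $B_\be=m-\sum_{\be'\ne\be}B_{\be'}$ shows concavity), hence affine. Because $B_\be=b^+_\be+b^-_\be$ is affine and both $b^\pm_\be$ are convex, each $b^\pm_\be$ must itself be affine. To promote "$B_\al$ is affine" to "$B_\al\equiv 0$", I would use Lemma \ref{lem:pos} to note $B_\al\equiv 0$ on an axis $c$ of $\al$, and then apply the same minimality argument once more: the intersection $\bigcap_{\be\in C_\al}\{B_\be=0\}$ is closed, convex, and $\Gamma$-invariant (by Lemma \ref{lem:businvariance}(4) again), so if nonempty it equals $X$. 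Its nonemptiness follows because at a point on an axis of $\al$ the affine $\Gamma^*$-invariance of the $B_\be$ forces their values to propagate along the axis (on a full line an affine $\Gamma^*$-invariant function is constant), pinning $m=0$. The hardest technical point here is precisely this propagation step, which replaces the geodesic extendability used in the proof of Theorem~\ref{thm:manifold}.

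For (2), with $b^\pm_\al$ now affine and $1$-Lipschitz, I would first handle the axis case: for any axis $c:\R\to X$ of $\al$, the image $A_o\circ c$ is a line in $\cE_o$ (Lemma \ref{lem:aff}), and pairing with $[b^+_\al]\in\cA$ gives $\|A_o(c(t))-A_o(c(s))\|\ge|b^+_\al(c(t))-b^+_\al(c(s))|=|t-s|$, which combined with $1$-Lipschitzness of $A_o$ yields arclength parametrization; together with $a_o(\al)(A_o\circ c)(t)=A_o(c(t+L_\al))$ this exhibits $A_o\circ c$ as an axis of $a_o(\al)$ with translation length $L_\al$. For arbitrary $x\in X$, the candidate direction vector is $v:=(A_o(\al x)-A_o(x))/L_\al\in\cA^*$; the same pairing with $[b^+_\al]$ shows $\|v\|=1$, and the fact that $B_\al\equiv 0$ (so $b^-_\al=-b^+_\al$, both affine) forces the linear part $\al^*$ of $a_o(\al)$ to fix $v$, because $E(\al^2 x,\al x)$ and $E(\al x,x)$ then evaluate identically against every element of $\cA$ produced by the Busemann functions. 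The line $t\mapsto A_o(x)+tv$ in $\cA^*$ is thus translated by $L_\al v$ under $a_o(\al)$ and contains every $A_o(\al^n x)\in\cE_o$; combining this with the line-convexity of $\cE_o$ (Lemma \ref{lem:aff}) shows the whole line sits inside $\cE_o$, so it is an axis of $a_o(\al)$ through $A_o(x)$.

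The main obstacle I anticipate is the last step of part (1): extracting $B_\al\equiv 0$ from "$B_\al$ affine, $\ge 0$, vanishing on $\Ax(\al)$". In the manifold case one uses geodesic extendability to extend a geodesic from any $y\in X$ through a point of $\Ax(\al)$ and force the affine $B_\al$ to stay nonnegative only by being constant. Here we must instead exploit minimality together with the abundance of axes of the elements of $\Gamma^*\setminus\{\id\}$ provided by the $\Gamma^*$-invariance.
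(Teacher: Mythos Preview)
Your averaging idea for (1) is clever and does yield that each $B_\beta$ is affine, but the ``propagation'' step does not pin $m=0$. On an axis $c$ of $\al$ you correctly get that every $B_\be$ is constant along $c$ (affine plus $\al$--periodic), yet only $B_\al$ itself is known to vanish there; for the other $\be\in C_\al$ the constant value of $B_\be$ on $c$ could be strictly positive, so $F_\al(c(0))=m$ need not be $0$. Showing $m=0$ is equivalent to showing $\bigcap_{\be\in C_\al}\{B_\be=0\}\neq\emptyset$, which is exactly the point at issue.

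Your direct argument for (2) also has a gap. Pairing with $[b^+_\al]$ gives only the lower bound $\|E(\al x,x)\|\ge L_\al$; the upper bound $\le L_\al$ would require $|f(\al x)-f(x)|\le L_\al$ for \emph{every} $1$--Lipschitz affine $f$ on $X$, not just for the Busemann functions, and there is no reason for this when $|x\,\al x|>L_\al$. Likewise, the claim that the linear part of $a_o(\al)$ fixes $v$ needs $E(\al^2x,\al x)=E(\al x,x)$ in $\cA^*$, i.e.\ $f(\al^2x)-f(\al x)=f(\al x)-f(x)$ for all affine $f$, which you only verify for Busemann--type $f$.

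The paper avoids both difficulties by a nested minimality argument. One defines $W\subset X$ as the set of $x$ satisfying (1) and (2) for every $\al\in\Ga^*\setminus\id$; since $\Ga^*$ is normal in $\Ga$, $W$ is closed, convex and $\Ga$--invariant. To see $W\neq\emptyset$ one passes to a minimal nonempty closed convex $\Ga^*$--invariant subset $W'\subset X$. For each $\al$, the displacement $d_\al$ attains its minimum on $W'$, producing an axis $c\subset W'$; then $\{B_\al=0\}$ and $A_o^{-1}(\Ax(a_o(\al)))$ are each closed, convex, $\Ga^*$--invariant and meet $W'$, so by minimality $W'$ lies in both. Thus $W'\subset W$, and the standing $\Ga$--minimality assumption on $X$ gives $W=X$. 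The key device you are missing is this intermediate $\Ga^*$--minimal set, which lets one handle each $\al$ separately via $\Ga^*$--invariance rather than forcing $\Ga$--invariance by averaging.
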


\begin{proof}
Let $W \subset X$ be the subset of all $x\in X$ such that
(1) and (2) hold for all $\al \in \Ga^*\setminus \id$. 
We have to show that
$W=X$.

Since $B_{\al}$ is a continuous convex function,
the condition (1) is closed and convex.
Since $Ax(a_o(\al))$ is a closed line-convex subset and
$A_o$ is continuous, also condition (2) is closed and convex.
Since $\Ga^*$ is a normal subgroup of $\Ga$, (1) and (2) describe a 
$\Ga$-invariant condition. Thus $W$ is closed convex and
$\Ga$-invariant.

We now show that $W$ is not empty.
To prove this, let $W' \subset X$ be a nonempty, closed, convex and 
$\Ga^*$-invariant subset,
which is minimal with these
properties. Since $\Ga^*$ operates (as a finite index subgroup of $\Ga$)
cocompactly on $X$, it follows as above, that such 
minimal set $W'$ exists. We show that
$W '\subset W$ and hence $W$ is not empty.
Let $\al \in \Ga^*\setminus \id$. Since
$\Ga^*$ is abelian, the convex function
$B_{\al}$ is $\Ga^*$-invariant and hence
$\{ B_{\al}=0\}$ is closed, convex and $\Ga^*$-invariant.

Now the function $d_{\al}$ assumes a minumum
on the subset $W'$, hence there exists an axis
$c:\R\to W'$ of $\al$.

Thus it follows from 
Lemma \ref{lem:businvariance} (3) that
$W'\cap \{ B_{\al}=0\}\ne \emptyset$.
This shows that $W'\subset \{B_{\al}=0\}$, since $W'$ was assumed to be minimal
and $\{ B_{\al}=0\}$ is closed, convex and $\Ga^*$-invariant. \\
Recall that $A_o$ maps geodesics to
lines or to a point.
We show that $A_o\circ c(t)= E(c(t),o)$ is actually a complete line.
Since $B_{\al}$ is constant and hence affine, also the functions 
$b^{\pm}_{\al}$ are affine. Since $b^{\pm}_{\al}$ are not constant on
$c(t)$, the map $t\mapsto E(c(t),o)$ is also not constant.
It follows that
$A_o\circ c$ is 
a complete line which is $a_o(\al)$-invariant and hence
an axis of $a_o(\al)$.
Since $\Ga^*$ is abelian, the set
$C:=\Ax(a_o(\al))\subset \cE_o$ is closed, line-convex and $a_o(\Ga^*)$-invariant.
Hence $A_o^{-1}(C)$ is closed, convex and $\Ga^*$-invariant.
By minimality $W'\subset A_o^{-1}(C)$, which implies that
for $x \in W'$, we have
$A_o(x)\in\Ax(a_o(\al))$.
Thus we have shown that $W'\subset W$ and hence $W \ne \emptyset$.
By the additional assumption $W=X$.
\end{proof}

\begin{lemma}
The map $A_o:x\mapsto E(x,o)$ is a rough isometry.
\end{lemma}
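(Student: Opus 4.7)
The plan is to exploit the preceding lemma, which under the minimality Assumption guarantees that for every $\al\in\Ga^*\setminus\{\id\}$ the Busemann function $b_\al$ is affine on all of $X$; since Busemann functions are automatically $1$-Lipschitz, the class $[b_\al]$ belongs to $\cA(X)$ with $\|[b_\al]\|\le 1$. By Lemma \ref{lem:aff} the map $A_o$ is $1$-Lipschitz and surjective onto $\cE_o$, so the only nontrivial task is to produce a uniform lower bound $\|E(x,o)-E(y,o)\|\ge |xy|-A$ with $A$ independent of $x,y\in X$; the upper estimate and the ``cobounded image'' clause in the definition of rough isometry come for free.

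Since $[b_\al]\in\cA(X)$ has norm at most $1$, the duality pairing gives at once
\[
\|E(x,y)\|\ \ge\ |E(x,y)([b_\al])|\ =\ |b_\al(x)-b_\al(y)|,
\]
so it suffices to exhibit, for each pair $x,y\in X$, some $\al\in\Ga^*\setminus\{\id\}$ with $|b_\al(x)-b_\al(y)|\ge |xy|-A$. This is essentially the estimate carried out in the proof of Theorem \ref{thm:manifold}. I would first fix, via the cocompactness of $\Ga^*$, a constant $C_1>0$ such that for every $p,q\in X$ some $\al\in\Ga^*$ satisfies $|p\,\al(q)|\le C_1$. Given $x,y$ with $|xy|>6C_1$, I would choose such an $\al$ with $|\al(x)\,y|\le C_1$, and then an axis $c$ of $\al$ with $|c(0)\,x|\le C_1$, obtained by translating any axis of $\al$ by a suitable element of $\Ga^*$ (this is again an axis since $\Ga^*$ is abelian).

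By Lemma \ref{lem:businvariance}(2), $b_\al(\al c(0))-b_\al(c(0))=\min d_\al=|c(0)\,\al c(0)|$, while the $1$-Lipschitz property of $b_\al$ provides three $C_1$-estimates along the broken path $x\to c(0)\to\al c(0)\to\al(x)\to y$. Combining these, and estimating $|c(0)\,\al c(0)|\ge |xy|-3C_1$ by the triangle inequality along the same path, yields
\[
|b_\al(x)-b_\al(y)|\ \ge\ |c(0)\,\al c(0)|-3C_1\ \ge\ |xy|-6C_1.
\]
For $|xy|\le 6C_1$ the desired bound is automatic, so $A=6C_1$ works and $A_o$ is a rough isometry onto $\cE_o$.

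The substantive obstacle is really packaged in the preceding lemma: without knowing that $b_\al$ is affine on all of $X$, it would not represent a class in $\cA(X)$, and the dual pairing used to extract the lower bound would be unavailable. Once that affinity is granted, the argument is a direct transcription of the corresponding separation step carried out in the manifold case, with equality of Busemann differences relaxed to a rough estimate controlled by the cocompactness constant.
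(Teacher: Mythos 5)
Your proposal is correct and follows essentially the same route as the paper: the paper also reduces the lower bound to the pairing $\|E(x,y)\|\ge |E(x,y)([b_{\al}])|=|b_{\al}(x)-b_{\al}(y)|$ and then invokes verbatim the $6C_1$-estimate from the proof of Theorem \ref{thm:manifold}, with the affinity of $b_{\al}$ from the preceding lemma being exactly what legitimizes $[b_{\al}]$ as a norm-one test element of $\cA(X)$. You have merely written out the steps the paper cites by reference, so there is nothing to change.
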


\begin{proof}

We know already that $A_o$ is $1$-Lipschitz.

We use arguments which are very similar to the arguments
in the proof of Theorem \ref{thm:manifold}.
Again $\Ga^*$ operates cocompactly on $X$, and hence there exists $C_1>0$
such that for all $p,q \in X$ there exists $\al \in \Ga^*$ with
$|p \al(q)|\le C_1$.

Let now $x,y \in X$ with $|xy| > 6 C_1$,
and choose $\al \in \Ga^*$ exactly as in the proof of 
Theorem \ref{thm:manifold}.

Then
$$\|E(x,o)-E(y,o)\|=\|E(x,y\|\ge \|E(x,y)([b_{\al}])\|
=|b_{\al}(x)-b_{\al}(y)|\ge |xy|-6C_1,$$
where the arguments are exactly as above. The claim follows.

\end{proof}

\begin{lemma} \label{lem:cE}
$\cE_o$ is an affine subspace of $\cA^*$ with
 $\dim(\cE_o)=\rank(\Ga^*)$.

\end{lemma}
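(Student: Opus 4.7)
The plan is to construct an $n$-dimensional affine subspace $V\subseteq\cE_o$ from the axes of the generators of $\Ga^*$, show each generator acts on $\cE_o$ as a pure translation, and then match $\cE_o=V$ via a fixed-point argument for the finite quotient $\Ga/\Ga^*$ together with the minimality assumption on $X$. Fix free generators $\al_1,\dots,\al_n$ of $\Ga^*\cong\Z^n$ and translate so that $A_o(o)=0$. The preceding lemma gives $0\in\Ax(a_o(\al_i))$, so there is an axis $c_i:\R\to\cE_o$ of $a_o(\al_i)$ through $0$ which is a complete affine line in $\cA^*$; set $v_i:=c_i(L_i)$ with $L_i=\min d_{\al_i}$, and $V:=\R v_1+\dots+\R v_n$.

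The central technical step is to show each $a_o(\al_i)$ acts on $\cE_o$ as the pure translation $p\mapsto p+v_i$. Writing $a_o(\al_i)(p)=\Phi_i(p)+v_i$ for its affine extension to $\mathrm{span}(\cE_o)$ with linear isometry $\Phi_i$, the preceding lemma furnishes an axis of $a_o(\al_i)$ through every $p\in\cE_o$ in direction $u_p:=(a_o(\al_i)(p)-p)/L_i\in\mathrm{Fix}(\Phi_i)$; hence $(\Phi_i-I)p+v_i=L_iu_p\in\mathrm{Fix}(\Phi_i)$, and applying $(\Phi_i-I)$ gives $(\Phi_i-I)^2p+(\Phi_i-I)v_i=0$. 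By linearity $(\Phi_i-I)^2\equiv 0$, and then the identity $\Phi_i^k=I+k(\Phi_i-I)$ combined with $\|\Phi_i^k\|=1$ forces $\Phi_i=I$. Consequently $a_o(\al_1^{k_1}\cdots\al_n^{k_n})$ is translation by $\sum_ik_iv_i$. Linear independence of the $v_i$ follows since a nontrivial dependence would give an $\al\in\Ga^*\setminus\id$ acting as the identity on $\cE_o$, contradicting the rough-isometric lower bound for $A_o$ together with $|x\,\al^Nx|\to\infty$ as $N\to\infty$. Hence $\dim V=n$; line-convexity applied iteratively to the lines $\R v_i\subseteq\cE_o$ yields $V\subseteq\cE_o$, and the axes of $a_o(\al)$ through any $p\in\cE_o$ for all $\al\in\Ga^*$ then cover $p+V$, so $\cE_o$ is $V$-invariant.

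The quotient $\bar\cE:=\cE_o/V\subseteq\cA^*/V$ is compact and convex (by cocompactness of $\Ga^*$ on $\cE_o$) and contains $\bar 0$. The $\Ga$-action on $\cE_o$ descends to $\cA^*/V$ and factors through the finite group $\Ga/\Ga^*$, acting on $\bar\cE$ by affine isometries; by Markov--Kakutani averaging it has a fixed point $\bar q$, lifting to some $q\in\cE_o$. Normality of $\Ga^*$ in $\Ga$ forces $\Phi_\ga(V)=V$ for every $\ga\in\Ga$ (conjugation preserves the lattice of translation vectors), so $q+V$ is an $a_o(\Ga)$-invariant $n$-dimensional affine subspace; its preimage $A_o^{-1}(q+V)\subseteq X$ is then a nonempty closed convex $\Ga$-invariant subset, hence equal to $X$ by the minimality assumption. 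This yields $\cE_o\subseteq q+V$, which combined with $V\subseteq\cE_o$ forces $q\in V$, so $\cE_o=V$. The main obstacle is the central translation step; in a general normed space an affine isometry with axes through every point need not be a translation, but $(\Phi_i-I)^2=0$ combined with $\Phi_i$ being a linear isometry rules out this pathology.
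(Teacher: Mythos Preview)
Your proof is correct and follows essentially the same strategy as the paper: show that $\Ga^*$ acts on $\cE_o$ by translations spanning an $n$-dimensional direction, find an $a_o(\Ga)$-invariant affine $n$-plane via a fixed-point argument on the quotient by this direction, and invoke the minimality assumption on $X$. The main structural difference is that the paper first restricts to the finite-dimensional line-convex hull of finitely many orbit-planes $H_{x_i}$ and then passes to a finite-dimensional quotient with a finite orbit (so the fixed point is just the barycenter), whereas you quotient $\cE_o$ by $V$ directly and appeal to Markov--Kakutani; your route works because cocompactness of $\Ga^*$ transports to $\cE_o$ via $A_o$, making $\cE_o/V$ genuinely compact.

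One remark on the translation step: your nilpotency argument for $\Phi_i=I$ is correct but heavier than necessary, and it requires justifying that $a_o(\al_i)$ extends to an affine isometry of $\mathrm{span}(\cE_o)$. A shorter route, available directly on $\cE_o$ without any extension: since every $p\in\cE_o$ lies on a \emph{line} axis of $a_o(\al)$, one has $a_o(\al)^k(p)=p+k\,w_p$ with $w_p:=a_o(\al)(p)-p$ for all $k\in\Z$; applying the isometry $a_o(\al)^k$ to $p,q$ gives
\[
\|(p-q)+k(w_p-w_q)\|=\|p-q\|\quad\text{for all }k\in\Z,
\]
which forces $w_p=w_q$. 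This is presumably what the paper has in mind when it asserts the translation property in one line. (Also note a small slip: your $v_i:=c_i(L_i)$ with $L_i=\min d_{\al_i}$ taken in $X$ need not equal the translation vector $a_o(\al_i)(0)$ in $\cE_o$, since $A_o$ is only $1$-Lipschitz; they do, however, span the same line, so nothing in the argument is affected.)
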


\begin{proof}

By the above Lemma all $a_o(\al)$, $\al \in \Ga^*\setminus \id$ 
operate as translations
on the set $\cE_o$. Thus the line-convex hull of
$a_0(\Ga^*)E(x,o)$ is an affine subspace $H_x \subset \cE_o$ of
dimension equal to $\rank(\Ga^*)$
for all $x \in X$.
Note that all $H_x$ are parallel to each other.
Since $\Ga^*$ is a finite index normal subgroup of $\Ga$.
there are finitely many points $x_0,\ldots,x_k \in X$, such that
$\Ga$ leaves the set $H_{x_o},\ldots,H_{x_k}$ invariant.
The line-convex hull 
of these subspaces defines a finite dimensional affine subspace
$V$ 
of $\cA^*$, and the group $a_o(\Ga)$ leaves this space invariant and
operates affinely on $V$, i.e. maps lines to lines.
Since $a_o(\Ga)$ also maps parallels of $H_x$ to parallels of $H_x$,
$a_o(\Ga)$ also operates affinely on the quotient space
$V/H_{x_o}$, and has on that space a finite orbit $[x_0],\ldots,[x_k]$.
Therefore this operation has a fixed point
$[v]$ which is contained in the line-convex hull of $[x_0],\ldots,[x_k]$.
This implies that there exists
an affine subspace $H$ parallel to $H_{x_o}$ which is $a_o(\Ga)$-invariant
and which is contained in the line-convex hull of
$H_{x_o},\ldots,H_{x_k}$. Thus $H$ is in the image of $A_o$.

Let $W=A_o^{-1}(H)$. Then $W\subset X$ is a nonempty
convex $\Ga$-invariant subset, and hence $W=X$ by our additional assumption.
It follows that for all $x\in X$, $H_x \subset H$ and by dimension reasons
$H_x=H$. It follows that $\cE_o=H$, which proves the Lemma.
\end{proof}

Thus we see that $A_o:X\to \cE_o$ is a $\Ga$-equivariant rough isometry.
By Lemma \ref{lem:cE} $\cE_o$ is a normed vector space which is
rough isometric to the Ptolemy space $X$.
By the Corollary \ref{cor:corschoenberg} to Schoenbergs result,
we see that $\cE_o$ is an inner product space and hence
we obtain
Theorem \ref{thm:cryst}.

Finally we prove Corollary \ref{cor:solv}:

Let $\Ga$ be a virtual polycyclic group. Let
$\Pi \subset \Ga$ be a polycyclic subgroup of finite index.
By a theorem of Hirsch (see e.g. \cite{R} p.139),
$\Pi$ contains a torsion free solvable subgroup $\Sigma$ of
finite index.
Then also $\Sigma$ operates cocompactly on $X$ and
by Theorem \ref{thm:straight} all abelian subgroups
of $\Sigma$ are straight. 
Note further that all abelian subgroups of $\Sigma$ are finitely generated,
since $\Pi$ is polycyclic.
Now a torsion free solvable group,
such that all abelian subgroups are finitely generated and
straight is a crystallographic group by \cite{CS} Lemma 4.3.
Thus $\Sigma$ and hence also $\Ga$ contains a free abelian subgroup $\Sigma^*$ of finite index
and finite rank. It follows that $\Ga$ has
also a normal free abelian subgroup $\Ga^*$ of finite index and finite rank.
But this fact is the only property of $\Ga$ which we needed in the proof of Theorems
\ref{thm:cryst} and \ref{thm:manifold}.

At the end we pose an 

{\bf Open Question}: Let $\Ga$ be a crystallographic group operating 
properly and cocompactly on a geodesic Ptolemy space $X$. Does there
exist a totally geodesic flat Euclidean subspace $F\subset X$ which
is $\Ga$-invariant?


\bigskip
\begin{tabbing}

Thomas Foertsch,\hskip10em\relax \= Viktor Schroeder,\\ 

Mathematisches Institut,\>
Institut f\"ur Mathematik, \\

Universit\"at Bonn,\> Universit\"at Z\"urich,\\
Beringstr. 1, \>
 Winterthurer Strasse 190, \\

D-53115 Bonn, Deutschland\>  CH-8057 Z\"urich, Switzerland\\

{\tt foertsch@math.uni-bonn.de}\> {\tt vschroed@math.unizh.ch}\\

\end{tabbing}
\end{document}